\newcommand{\F}{\mathbb {F}}
\newtheorem{theorem}{Theorem}[section]
\newtheorem{definition}[theorem]{Definition}
\newtheorem{lemma}[theorem]{Lemma}
\newtheorem{proposition}[theorem]{Proposition}
\def \ord {\rm{ord}\,}
\begin{document}

\title[Primitive $2$-normal elements]{Existence of primitive $2$-normal elements
in finite fields}

\author{Josimar J. R. Aguirre and Victor G.L. Neumann}

\maketitle


\vspace{8ex}
\noindent
\textbf{Keywords:} Primitive element, $2$-normal element, normal basis, finite 
fields.\\
\noindent
\textbf{MSC:} 12E20, 11T23

\begin{abstract}
An element $\alpha \in \mathbb{F}_{q^n}$ is normal over $\F_q$ if $\mathcal{B}=\{\alpha, \alpha^q, \alpha^{q^2}, \cdots, \alpha^{q^{n-1}}\}$ forms a basis of $\F_{q^n}$ as a vector space over $\F_q$. It is well known that $\alpha \in \F_{q^n}$ is normal over $\F_q$ if and only if $g_{\alpha}(x)=\alpha x^{n-1}+\alpha^q x^{n-2}+ \cdots + \alpha^{q^{n-2}}x+\alpha^{q^{n-1}}$ and $x^n-1$ are relatively prime over $\F_{q^n}$, that is, the degree of their greatest common divisor in $\F_{q^n}[x]$ is $0$.
Using this equivalence, the notion of $k$-normal elements was introduced in Huczynska et al. ($2013$): an element $\alpha \in \F_{q^n}$ is $k$-normal over $\F_q$ if the greatest common divisor of the polynomials $g_{\alpha}[x]$ and $x^n-1$ in $\F_{q^n}[x]$ has degree $k$; so an element
which is normal in the usual sense is $0$-normal.

Huczynska et al. made the question about the pairs $(n,k)$ for which there exist primitive $k$-normal elements in $\F_{q^n}$ over $\F_q$ and they got a partial result for the case $k=1$, and later Reis and Thomson ($2018$) completed this case. The Primitive Normal Basis Theorem solves the case $k=0$. In this paper, we solve completely the case $k=2$ using estimates for Gauss sum and the use of the computer, we also obtain a new condition for the existence of $k$-normal elements in $\F_{q^n}$.

\end{abstract}

\section{Introduction}

Let $\mathbb{F}_{q^n}$ be a finite field with $q^n$ elements, where $q$ 
is a prime power and $n$ is a positive integer.
An element $\alpha \in \mathbb{F}_{q^n}^*$ is primitive if $\alpha$ generates
the cyclic multiplicative group $\mathbb{F}_{q^n}^*$ ($\alpha$ has multiplicative
order $q^n-1$). Also, $\alpha \in \mathbb{F}_{q^n}$ is normal over $\mathbb{F}_{q}$ if the set
$B_{\alpha}=\{ \alpha^{q^i} \mid 0 \leq i \leq n-1 \}$ spans $\mathbb{F}_{q^n}$
as a $\mathbb{F}_q$-vector space, in this case we say that $B_{\alpha}$ is
a normal basis.
Normal basis are frequently used in cryptography and computer algebra systems due
to the efficiency of exponentiation. Primitive
elements are constantly used in cryptographic applications such as discrete
logarithm problem and pseudorandom number generators \cite{meletiou}. If we put these two properties together, we obtain a \textit{primitive normal element}. We can study the multiplicative structure of $\mathbb{F}_{q^n}$ and at the same time see $\mathbb{F}_{q^n}$ as a vector space over $\mathbb{F}_q$. The \textit{Primitive Normal Basis Theorem} states that for any extension field $\mathbb{F}_{q^n}$ of $\mathbb{F}_q$, there exists a basis composed by primitive normal elements; this result was first proved by Lenstra and Schoof \cite{lenstra} using a combination of character sums, sieving results and a computer search.

One can prove that an element $\alpha \in \mathbb{F}_{q^n}$ is normal if and only if the polynomial $g_{\alpha}(x)=\alpha x^{n-1}+ \alpha^q x^{n-2} + \ldots + \alpha^{q^{n-2}} x + \alpha^{q^{n-1}}$ and $x^n-1$ are relatively prime over $\mathbb{F}_{q^n}$
\cite[Theorem 2.39]{LN}. With this as motivation, Huczynska et al. \cite{knormal} introduced the concept of $k$-normal elements, as an extension of the usual normal elements:

\begin{definition}
Let $\alpha \in \mathbb{F}_{q^n}$ and let $g_{\alpha}(x) = \sum_{i=0}^{n-1} \alpha^{q^i}x^{n-1-i} \in \mathbb{F}_{q^n}[x]$. If $\gcd(x^n-1,g_{\alpha}(x))$ over $\mathbb{F}_{q^n}$ has degree $k$ (where $0 \leq k \leq n-1$), then $\alpha$ is a $k$-normal element of $\mathbb{F}_{q^n}$ over $\mathbb{F}_q$. \footnote{We use this definition to find primitive $2$-normal elements for specific values of $(q,n)$, using Sagemath (cf. \cite{SAGE}) program. Also, throughout this paper, when we talk about a $k$-normal element $\alpha \in \mathbb{F}_{q^n}$, it will be over $\mathbb{F}_q$.}
\end{definition}


The $k$-normal elements can be used to reduce the multiplication process in finite fields, see \cite{negre}. From the above definition,
elements which are normal in the usual sense are $0$-normal and from the Primitive Normal Basis Theorem, we know that they always exist.
There are several criteria in the literature for the existence of $k$-normal elements 
(see for example \cite{lucas}, \cite{sozaya}, \cite{zhang}).
In \cite{knormal} the authors worked out the case $k=1$, and partially established a Primitive $1$-normal element Theorem. Reis and Thompson completed the case $k=1$ in \cite{lucas1}.

A question which naturally arises is: for which values of $k$ one has a Primitive $k$-normal element Theorem (see \cite[Problem 6.3]{knormal})?
On this line, in \cite{lucas}, Reis obtained a sufficient condition for the existence of primitive $k$-normal elements, and he proved that \textit{given $\epsilon>0$, for $q$ sufficiently large, 
there exist primitive $k$-normal elements for $k \in [0,(\frac{1}{2}-\epsilon)n]$, whenever $k$-normal elements actually exist in $\mathbb{F}_{q^n}$}. Since this is an asymptotic result for $q$, it is not possible to conclude the result for specific values, but from
the condition that he obtained
it is possible to generalize and study particular cases of $k$.


Since the cases $k=0$ and $k=1$ are completely finished, in this paper we study the case $k=2$ as follows: in Section \ref{sectionpre}, we provide background material that is used along the
paper. In Section \ref{sectiongen}, we present two general conditions for the existence of primitive $k$-normal elements in $\F_{q^n}$ over $\F_q$, as well as some weaker conditions for some particular cases.
In Section \ref{sectionalmost}, we apply the results from previous sections
to prove
all cases for $n \geq 8$ and also for $q \leq 19$.
In Section \ref{section=567}, we study the cases $n=5,6,7$ developing new ideas based on the factorization of certain divisors of $q^n-1$. Finally, in Section \ref{section=4}
we study the remaining case $n=4$,
where we prove  that there exist primitive $2$-normal elements
in $\F_{q^4}$ if and only if $q \equiv 1 \pmod 4$. For this last case
we develop  Gauss sum which is different from the ones used in the previous cases.

Our results can be summarized in the following theorem.

\begin{theorem}[\bf{The Primitive $2$-Normal Theorem}]\label{Final}
Let $q$ be a prime power and $n$ be a natural number.
There exists a primitive $2$-normal element in $\F_{q^n}$ if and only if
$n \geq 5$ and
$\gcd(q^3-q,n) \neq 1$ or $n=4$ and $q \equiv 1 \pmod 4$.
\end{theorem}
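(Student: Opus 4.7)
The plan is to prove the two directions of the biconditional separately. For necessity, Section~\ref{sectionfirst} already rules out $n \leq 3$ by showing that no $2$-normal elements exist there at all. For $n \geq 5$, the existence of even a single $2$-normal element in $\F_{q^n}$ requires $x^n - 1$ to admit an $\F_q$-factor of degree $2$, and analyzing the Frobenius orbits on the $n$-th roots of unity reduces this combinatorial requirement to the numerical condition $\gcd(q^3 - q, n) \neq 1$. For $n = 4$, I would argue that when $q \not\equiv 1 \pmod 4$ the degree-$2$ divisors of $x^4 - 1$ over $\F_q$ all correspond to subfield structures, so every $2$-normal element lies in a proper subfield of $\F_{q^4}$ and hence cannot be primitive.

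For sufficiency I would apply the character-sum framework developed in Section~\ref{sectiongen}. Let $N(q,n)$ denote the number of primitive $2$-normal elements of $\F_{q^n}$. Detecting primitivity by multiplicative characters of $\F_{q^n}^*$ and $2$-normality by additive characters twisted by the $\F_q[x]/(x^n-1)$-module structure (so as to pick out a prescribed degree-$2$ common factor of $g_\alpha$ with $x^n - 1$), one obtains an expression for $N(q,n)$ whose main term is a positive multiple of $q^n$ and whose error is controlled by a Weil-type Gauss sum bound. Positivity of $N(q,n)$ then follows from an explicit inequality of the form
\[
q^{n/2} > W(q^n-1)\,W(x^n-1),
\]
where $W$ counts suitable squarefree divisors; this is the target bound to be verified.

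The inequality is straightforward to check for $n \geq 8$ once $q$ is not too small, which dispatches the bulk of cases. The remaining parameter ranges constitute the main obstacle. For $n \geq 8$ with $q \leq 19$, sharper sieving on the divisor sums combined with Sagemath verification suffices. For $n \in \{5,6,7\}$, one refines the sieve by exploiting explicit factorizations of $q^n - 1$ as indicated in Section~\ref{section=567}. The most delicate case is $n = 4$: the generic Weil bound of strength $q^2$ is too weak against the number of characters involved, and so a dedicated Gauss sum analysis tailored to the degree-$4$ extension must be developed to recover the sharp criterion $q \equiv 1 \pmod 4$. Any residual pairs $(q,n)$ left unresolved by the bounds are then cleared by direct computer search.
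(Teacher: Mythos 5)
Your outline reproduces the paper's overall architecture (necessity from the structure of $x^n-1$ plus a special analysis for $n\le 4$; sufficiency from character sums, sieving and computer search), but two of its load-bearing steps are stated incorrectly or left unsupplied. First, the key inequality is not $q^{n/2} > W(q^n-1)W(x^n-1)$ but $q^{n/2-2} \ge W(q^n-1)W(x^n-1)$: the Gauss sums involve $\eta_d(L_f(\alpha))$ where $L_f$ has degree $q^2$, so the Weil-type bounds (Lemmas \ref{lema cota 1} and \ref{lema cota 2}) cost a factor $q^{k}=q^{2}$ in the error term. This loss is exactly what forces the elaborate multi-stage sieves of Section \ref{section=567} for $n=5,6,7$ (for $n=5$ the exponent on the left is only $1/2$, so even after sieving one must push $q$ down from about $10^{49}$ in several rounds) and what makes the method give nothing at all for $n=4$ (exponent $0$). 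With the exponent you wrote, those cases would be nearly trivial, so the inequality as stated cannot be the actual target bound.

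Second, for $n=4$ the phrase ``a dedicated Gauss sum analysis must be developed'' is a placeholder for the one genuinely new idea of the paper. The actual mechanism is: fix $\beta=L_f(\alpha)$ with $f=(x+1)(x+b)$, $b^2=-1$, check that $\beta\notin\F_{q^2}$ (Lemma \ref{2normalF4}), and observe (Lemma \ref{primes2normal}) that any \emph{primitive} element of the form $a+\beta$ with $a\in\F_q$ is automatically $2$-normal, because $L_{(x-1)(x-b)}$ annihilates it and primitivity excludes a higher normality defect. One then counts $m$-free elements among the $q$ translates $a+\beta$ using the incomplete character sum bound of Fu--Wan (Lemma \ref{cotaenFq}), obtaining the condition $q^{1/2}\ge 3\,W(m)\Delta$, whose left side does grow with $q$. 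Without this reduction the $n=4$ case is not merely ``delicate''; it is unreachable by the Section \ref{sectiongen} machinery. Two smaller inaccuracies in your necessity direction: for $n=3$, $2$-normal elements do exist (any $\alpha$ with $\dim V_\alpha=1$, e.g.\ $\alpha\in\F_q^*$); what fails is primitivity, since $\alpha^{(q-1)^2}=1$. And for $n=4$, $q\equiv 3\pmod 4$, the element $L_{x^2-1}(\alpha)=\alpha^{q^2}-\alpha$ does \emph{not} lie in a proper subfield; it satisfies $\beta^{q^2}=-\beta$, which only bounds its multiplicative order by $2(q^2-1)<q^4-1$.
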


In Appendix A we show the SageMath procedures that we used in the paper
and in Appendix B we present tables with primitive $2$-normal
elements for specific cases. 

\section{Preliminaries}\label{sectionpre}

In this section, we present some definitions and results that will be useful in the rest of
this paper. We start with the following definitions.

\begin{definition}
\begin{enumerate}
\item[(a)] Let $f(x)$ be a monic polynomial with coefficients in $\mathbb{F}_{q}$. The Euler Totient Function for polynomials over $\mathbb{F}_q$ is given by
$$
\Phi_q(f)= \left| \left( \dfrac{\mathbb{F}_q[x]}{\langle f \rangle} \right)^{*} \right|,
$$
where $\langle f \rangle$ is the ideal generated by $f(x)$ in $\mathbb{F}_q[x]$.
\item[(b)] If $t$ is a positive integer (or a monic polynomial over $\mathbb{F}_q$), $W(t)$ denotes the number of square-free (monic) divisors of $t$.
\item[(c)] If $f(x)$ is a monic polynomial with coefficients in $\mathbb{F}_q$, the Polynomial Möbius Function $\mu_q$ is given by $\mu_q(f)=0$ if $f$ is not square-free and $\mu_q(f)=(-1)^r$ if $f$ is a product of $r$ distinct irreducible factors over $\mathbb{F}_q$.
\end{enumerate}
\end{definition}

We have an interesting formula for the number of $k$-normal elements over
finite fields:

\begin{theorem}\label{counting-knormal}
(\cite{knormal}, Theorem 3.5) The number of $k$-normal elements of $\mathbb{F}_{q^n}$ over $\mathbb{F}_q$ is given by
$$
	\sum_{\substack{h| x^n-1 \\ \deg(h)=n-k}} \Phi_q(h),
$$
where the divisors are monic and the polynomial division is over $\mathbb{F}_q$.
\end{theorem}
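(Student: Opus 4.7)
The plan is to identify $\F_{q^n}$ with a cyclic $\F_q[x]$-module and then to reinterpret the $k$-normality condition as a statement about $\F_q[x]$-annihilators. Endow $\F_{q^n}$ with the $\F_q[x]$-module structure in which $x$ acts as the Frobenius $\sigma\colon \gamma\mapsto \gamma^q$; since $\sigma^n=\mathrm{id}$, this module is annihilated by $x^n-1$, and the Normal Basis Theorem supplies a cyclic generator. Fix a normal (i.e.\ $0$-normal) element $\beta\in\F_{q^n}$; then the assignment
$$
\varphi\colon \F_q[x]/\langle x^n-1\rangle \longrightarrow \F_{q^n},\qquad \sum_{j=0}^{n-1}a_j x^j \longmapsto \sum_{j=0}^{n-1}a_j \beta^{q^j},
$$
is an isomorphism of $\F_q[x]$-modules. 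For each $\alpha\in\F_{q^n}$, write $a_\alpha(x)\in\F_q[x]/\langle x^n-1\rangle$ for its preimage under $\varphi$.

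The key computation is the congruence
$$
g_\alpha(x) \;\equiv\; a_\alpha(x)\cdot g_\beta(x)\pmod{x^n-1}
$$
in $\F_{q^n}[x]$. I would prove it by substituting $\sigma^i(\alpha)=\sum_j a_j\,\sigma^{i+j}(\beta)$ into $g_\alpha(x)=\sum_i \sigma^i(\alpha)\,x^{n-1-i}$ and reversing the order of summation: for each fixed $j$ the inner sum $\sum_i \sigma^{i+j}(\beta)\,x^{n-1-i}$ is a cyclic shift of $g_\beta(x)$ equal to $x^j g_\beta(x)$ modulo $x^n-1$, so the whole expression collapses to $\bigl(\sum_j a_j x^j\bigr)g_\beta(x)=a_\alpha(x)\,g_\beta(x)$. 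Since $\beta$ is normal, $\gcd(g_\beta,x^n-1)=1$ in $\F_{q^n}[x]$, so $g_\beta$ is a unit modulo $x^n-1$ and the congruence yields
$$
\gcd\bigl(g_\alpha(x),\,x^n-1\bigr)\;=\;\gcd\bigl(a_\alpha(x),\,x^n-1\bigr)
$$
in $\F_{q^n}[x]$. Because $a_\alpha$ and $x^n-1$ both have coefficients in $\F_q$, this gcd already lies in $\F_q[x]$ and agrees, as a monic polynomial, with the gcd computed over $\F_q$. Consequently $\alpha$ is $k$-normal if and only if $\deg\gcd(a_\alpha,x^n-1)=k$; equivalently, the $\F_q[x]$-annihilator $m_\alpha(x):=(x^n-1)/\gcd(a_\alpha,x^n-1)$ of $\alpha$ has degree $n-k$.

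The counting is then routine. Fix a monic divisor $h\mid x^n-1$ in $\F_q[x]$ with $\deg h=n-k$ and set $h'=(x^n-1)/h$. The elements $a\in\F_q[x]/\langle x^n-1\rangle$ with annihilator exactly $h$ are precisely those of the form $a=h'\cdot b$ where $b$ ranges over residues modulo $h$ with $\gcd(b,h)=1$; their number equals $\Phi_q(h)$ by the very definition of the polynomial Euler phi function. Transporting back via $\varphi$ and summing over all such $h$ gives the stated formula.

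The main obstacle is verifying the congruence $g_\alpha\equiv a_\alpha\cdot g_\beta\pmod{x^n-1}$; after that, everything reduces to standard bookkeeping inside $\F_q[x]/\langle x^n-1\rangle$. A pleasant feature of this approach is that it works uniformly, without any coprimality hypothesis on $(q,n)$, because it stays entirely inside the quotient ring $\F_{q^n}[x]/(x^n-1)$ and never requires factoring $x^n-1$ into linear factors.
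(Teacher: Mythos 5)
Your proposal is correct. Note that the paper itself does not prove this statement; it only cites \cite{knormal}, Theorem 3.5, so there is no in-paper argument to compare against. Your reconstruction is essentially the standard one: the congruence $g_\alpha \equiv a_\alpha\, g_\beta \pmod{x^n-1}$ checks out (the inner sum is indeed a cyclic shift, and reducing modulo $x^n-1$ handles the wrap-around of the exponents), the invertibility of $g_\beta$ modulo $x^n-1$ follows from normality of $\beta$, the descent of the gcd from $\F_{q^n}[x]$ to $\F_q[x]$ is legitimate because the Euclidean algorithm is insensitive to field extension, and the count of residues with annihilator exactly $h$ being $\Phi_q(h)$ is the correct bookkeeping. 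Your identification of $k$-normality with the annihilator having degree $n-k$ is also consistent with Theorem \ref{equiv-knormal}(iii) quoted in the paper, and you are right that no coprimality assumption on $(q,n)$ is needed.
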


\subsection{Linearized polynomials and the $\mathbb{F}_q$-order.} Here we present some definitions and basic results on linearized polynomials over finite fields that are frequently used in this paper. A useful feature of these polynomials is the structure of the set of roots that facilitates the determination of the roots, see \cite[Section 3.4]{LN}.

\begin{definition}
Let $f \in \mathbb{F}_q[x]$ with $f(x)=\sum_{i=0}^r a_ix^i$.
\begin{enumerate}
\item[(a)] The polynomial $L_f(x):=\sum_{i=0}^r a_ix^{q^i}$ is the 
linearized $q$-associate of $f$.
\item[(b)] For $\alpha \in \mathbb{F}_{q^n}$, we set 
$L_f(\alpha)= \sum_{i=0}^r a_i \alpha^{q^i}$.
\end{enumerate}
\end{definition}

The polynomial $L_f$
induces a linear transformation of $\mathbb{F}_{q^n}$ over $\F_q$ that also has additional properties:

\begin{lemma}\cite[Lemma 3.59]{LN}
Let $f,g \in \mathbb{F}_q[x]$. The following hold:
\begin{enumerate}
\item[(a)] $L_f(x)+L_g(x)=L_{f+g}(x)$;
\item[(b)] $L_{fg}(x)=L_f \left( L_g(x) \right) = L_g \left( L_f(x) \right)$.
\end{enumerate}
\end{lemma}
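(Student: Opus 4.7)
My plan is to establish both identities by direct manipulation of the defining sum $L_f(x) = \sum_{i=0}^{r} a_i x^{q^i}$, using two features of the coefficients: that they lie in $\mathbb{F}_q$, so they are fixed by every iterated Frobenius $y \mapsto y^{q^i}$; and that the Frobenius is additive in characteristic $p$, so it passes through sums of $q$-polynomials without creating cross terms.

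For part (a), I would pad $f$ and $g$ with zero coefficients so that both are written over a common index range $0,\ldots,r$, and then just expand $(f+g)(x) = \sum_i (a_i+b_i) x^i$. Applying the definition of the linearized associate,
\[
L_{f+g}(x) = \sum_{i=0}^{r} (a_i+b_i)\, x^{q^i} = \sum_{i=0}^{r} a_i x^{q^i} + \sum_{i=0}^{r} b_i x^{q^i} = L_f(x) + L_g(x),
\]
which is the identity required. This step is purely formal and does not use anything special about $\mathbb{F}_q$.

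For part (b), the central calculation is $(L_g(x))^{q^i}$. Since raising to the $q^i$-th power is additive and $b_j^{q^i}=b_j$ for every $b_j \in \mathbb{F}_q$, I obtain
\[
(L_g(x))^{q^i} = \Bigl(\sum_{j} b_j x^{q^j}\Bigr)^{q^i} = \sum_{j} b_j^{q^i} x^{q^{i+j}} = \sum_{j} b_j\, x^{q^{i+j}}.
\]
Multiplying by $a_i$, summing over $i$, and reindexing by $k = i+j$ gives
\[
L_f(L_g(x)) = \sum_{i} a_i (L_g(x))^{q^i} = \sum_{i,j} a_i b_j\, x^{q^{i+j}} = \sum_{k} \Bigl(\sum_{i+j=k} a_i b_j\Bigr) x^{q^k}.
\]
This matches $L_{fg}(x)$, because $fg$ itself equals $\sum_k (\sum_{i+j=k} a_i b_j)\, x^k$. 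The second identity $L_{fg}(x) = L_g(L_f(x))$ follows for free from the commutativity of multiplication in $\mathbb{F}_q[x]$.

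The only step with any real content is the equality $b_j^{q^i} = b_j$, which requires the coefficients to lie in $\mathbb{F}_q$ rather than in a proper extension; this is precisely why composition of linearized polynomials translates into ordinary polynomial multiplication. Part (a) would hold over any commutative ring of coefficients, while part (b) genuinely needs $f,g \in \mathbb{F}_q[x]$. There is no combinatorial obstacle; the whole proof amounts to checking that two double sums agree after one reindexing.
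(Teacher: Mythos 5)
Your proof is correct. The paper itself does not prove this lemma but simply cites \cite[Lemma 3.59]{LN}, and the argument given there is exactly your direct computation: part (a) is formal, and part (b) rests on the additivity of $y \mapsto y^{q^i}$ in characteristic $p$ together with $b_j^{q^i}=b_j$ for $b_j \in \F_q$, followed by the reindexing $k=i+j$; your closing observation about where the hypothesis $f,g \in \F_q[x]$ is genuinely used is also accurate.
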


\begin{lemma}\label{kernel}
Let $f,g \in \F_q[x]$ such that $fg=x^n-1$. For
every $\alpha \in \F_{q^n}$, we have that
$L_g(\alpha)=0$ if and only if $\alpha=L_f(\beta)$ for some
$\beta \in \F_{q^n}$.
\end{lemma}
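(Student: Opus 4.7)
The plan is to prove this by showing that the two $\F_q$-subspaces $\text{Im}\, L_f$ and $\ker L_g$ of $\F_{q^n}$ coincide. First I would establish the inclusion $\text{Im}\, L_f \subseteq \ker L_g$ directly, and then I would show they have the same $\F_q$-dimension.

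The inclusion should follow immediately from the composition rule in the preceding lemma: for $\beta \in \F_{q^n}$,
\[
L_g(L_f(\beta)) = L_{gf}(\beta) = L_{x^n-1}(\beta) = \beta^{q^n} - \beta = 0,
\]
so every element of $\text{Im}\, L_f$ lies in $\ker L_g$.

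For the reverse direction I would rely on a separability/dimension count. The key observation is that any monic divisor $h \in \F_q[x]$ of $x^n - 1$ has $h(0) \neq 0$, because $x^n - 1$ has constant term $-1$; consequently $L_h(x) = \sum_{i} a_i x^{q^i}$ has nonzero derivative $a_0 = h(0)$ and is therefore separable of degree $q^{\deg h}$. Applying this with $h = g$, every root $\alpha$ of $L_g$ satisfies $L_{x^n-1}(\alpha) = L_f(L_g(\alpha)) = 0$, so all $q^{\deg g}$ distinct roots of $L_g$ already lie in $\F_{q^n}$, giving $\dim_{\F_q} \ker L_g = \deg g$. Applying the same observation to $h = f$ and using rank-nullity for the $\F_q$-linear map $L_f \colon \F_{q^n} \to \F_{q^n}$ yields $\dim_{\F_q} \text{Im}\, L_f = n - \deg f = \deg g$. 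Combining this dimension equality with the inclusion already proved gives $\text{Im}\, L_f = \ker L_g$. The only point that needs care is ensuring that $L_g$ actually attains its maximal root count $q^{\deg g}$ inside $\F_{q^n}$; this is exactly what the separability of $L_g$ together with the functional identity $L_{x^n-1} = L_f \circ L_g$ guarantees.
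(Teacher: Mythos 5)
Your proof is correct, but it establishes the key dimension count by a different mechanism than the paper. Both arguments begin identically, with the inclusion $\text{Im}\, L_f \subseteq \text{Ker}\, L_g$ coming from $L_g \circ L_f = L_{fg} = L_{x^n-1} = 0$. For the converse you invoke separability: since $g(0) \neq 0$, the formal derivative of $L_g$ is the nonzero constant $g(0)$, so $L_g$ has exactly $q^{\deg g}$ distinct roots, and the identity $L_f \circ L_g = L_{x^n-1}$ forces every such root $\alpha$ to satisfy $\alpha^{q^n} = \alpha$, hence to lie in $\F_{q^n}$; the symmetric statement for $f$ together with rank--nullity gives $\dim_{\F_q} \text{Im}\, L_f = n - \deg f = \deg g = \dim_{\F_q} \text{Ker}\, L_g$, and the inclusion becomes an equality. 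The paper avoids separability altogether: it uses only the trivial bounds $|\text{Ker}\, L_g| \leq q^{\deg g}$ and $|\text{Im}\, L_g| \leq |\text{Ker}\, L_f| \leq q^{\deg f}$ (the latter because $L_f \circ L_g = 0$ as well), and then the rank--nullity identity $n = \dim_{\F_q} \text{Im}\, L_g + \dim_{\F_q} \text{Ker}\, L_g \leq \deg f + \deg g = n$ squeezes all the inequalities into equalities at once. Your route costs the extra observation that $L_g' = g(0) \neq 0$ (a fact the paper does exploit later, in the proof of Theorem \ref{condicao em N-f}), but in exchange it yields the strictly stronger conclusion that every root of $L_g$ in an algebraic closure is simple and already lies in $\F_{q^n}$; the paper's counting sandwich is marginally more economical for the lemma as stated. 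Both arguments are complete and valid.
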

\begin{proof}
Observe that $fg=x^n-1$ implies 
$L_g \circ L_f =L_f \circ L_g = L_{x^n-1} = 0$,
so $\text{Im}\, L_f \subset \text{Ker}\, L_g$ and
$\text{Im}\, L_g \subset \text{Ker}\, L_f$.
On the other hand, $L_g$ has degree $q^{\deg g}$ and $\text{Ker}\, L_g$ has
at most dimension $\deg g$. Conversely,
we have that $\text{Im}\, L_g$ has at most dimension $\deg f=n-\deg g$. So,
we get that $\text{Ker}\, L_g$ has dimension exactly
$\deg g$
and $\text{Im}\, L_f = \text{Ker}\, L_g$, since
$n= \dim_{\F_q} \text{Im}\, L_g + \dim_{\F_q} \text{Ker}\, L_g
\leq (n - \deg g) + \deg g$.
\end{proof}

Let $D \in \F_q[x]$ be a monic polynomial.
We say that an element $\alpha \in \F_{q^n}$ has $\F_q$-order $D$
if $D$ is the lowest degree monic polynomial such that $L_D(\alpha)=0$.
It is known that the $\F_q$-order of an element $\alpha \in \F_{q^n}$
divides $x^n-1$ and we also have the following equivalences.

\begin{theorem}\label{equiv-knormal}
(\cite{knormal}, Theorem 3.2) Let $\alpha \in \mathbb{F}_{q^n}$. The following three properties are equivalent:
\begin{enumerate}
\item[(i)] $\alpha$ is $k$-normal over $\F_q$.
\item[(ii)] Let $V_{\alpha}$ be the $\F_q$-vector space generated by $\{ \alpha, \alpha^q, \ldots, \alpha^{q^{n-1}} \}$, then $\dim V_{\alpha}$ is $n-k$.
\item[(iii)] $\alpha$ has $\mathbb{F}_q$-order of degree $n-k$.
\end{enumerate} 
\end{theorem}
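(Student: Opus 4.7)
The plan is to prove the three-way equivalence by the cycle (ii) $\Leftrightarrow$ (iii) and (i) $\Leftrightarrow$ (iii). For (ii) $\Leftrightarrow$ (iii), I would consider the $\F_q$-linear map $\phi : \F_q[x] \to \F_{q^n}$ defined by $\phi(f)=L_f(\alpha)$. Since $\phi(x^i)=\alpha^{q^i}$, the image of $\phi$ is exactly $V_\alpha$, while its kernel is, by definition of the $\F_q$-order, the principal ideal $(D)$. The first isomorphism theorem then gives $V_\alpha \cong \F_q[x]/(D)$ as $\F_q$-vector spaces, whence $\dim_{\F_q} V_\alpha = \deg D$, so (ii) and (iii) are equivalent.

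For (i) $\Leftrightarrow$ (iii), the main step is to establish the ``shift'' identity
\[
f(x)\cdot g_\alpha(x) \;\equiv\; g_{L_f(\alpha)}(x) \pmod{x^n-1}
\]
for every $f \in \F_q[x]$, which I would prove by matching the coefficient of each $x^m$ on both sides and using $\alpha^{q^n}=\alpha$ to reduce exponents modulo $n$. Since the polynomial $g_\gamma(x)$ has degree at most $n-1$ and vanishes iff $\gamma=0$, this identity implies $(x^n-1) \mid f\cdot g_\alpha$ in $\F_{q^n}[x]$ iff $L_f(\alpha)=0$. In particular, taking $f=D$ we obtain $(x^n-1)\mid D\cdot g_\alpha$.

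Next I would show that $G:=\gcd(g_\alpha,x^n-1)$, once normalized to be monic, lies in $\F_q[x]$. A direct calculation gives the Frobenius identity $\sigma(g_\alpha)=x\cdot g_\alpha-\alpha(x^n-1)$, where $\sigma$ acts on coefficients. Thus $\sigma(G)\mid x\cdot g_\alpha$ and $\sigma(G)\mid x^n-1$; since $\gcd(x,x^n-1)=1$ this forces $\sigma(G)\mid G$, and comparing degrees and leading coefficients gives $\sigma(G)=G$, so $G\in\F_q[x]$. Setting $v:=(x^n-1)/G \in \F_q[x]$ and writing $g_\alpha=Gu$ with $u\in\F_{q^n}[x]$ satisfying $\gcd(u,v)=1$, the relation $v\cdot g_\alpha=u(x^n-1)\equiv 0 \pmod{x^n-1}$ combined with the shift identity yields $L_v(\alpha)=0$, whence $D\mid v$. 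Conversely, from $(x^n-1)\mid D\cdot g_\alpha=DGu$ we get $v\mid Du$, and the coprimality $\gcd(u,v)=1$ forces $v\mid D$. Hence $v=D$, so $\deg G = n-\deg D$, establishing (i) $\Leftrightarrow$ (iii).

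The main obstacle is the coefficient-level bookkeeping required for the shift identity $f\cdot g_\alpha\equiv g_{L_f(\alpha)}\pmod{x^n-1}$, and the companion Frobenius identity $\sigma(g_\alpha)=x\cdot g_\alpha-\alpha(x^n-1)$; both rely on carefully repackaging exponents modulo $n$ via $\alpha^{q^n}=\alpha$. Once these two algebraic identities are in place, the rest of the argument is a routine interplay of the $\F_q[x]$-module structure on $\F_{q^n}$ coming from Frobenius and elementary gcd manipulations.
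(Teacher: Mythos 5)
Your proof is correct. Note that the paper does not prove this statement at all---it simply cites Theorem 3.2 of Huczynska et al.~\cite{knormal}---so there is no in-paper argument to compare against; your reconstruction follows essentially the standard route of that reference, via the $\F_q[x]$-module structure on $\F_{q^n}$, the shift identity $f\cdot g_\alpha\equiv g_{L_f(\alpha)}\pmod{x^n-1}$, and the Frobenius-invariance of $\gcd(g_\alpha,x^n-1)$. All the key steps check out: the identity is verified for $f=x$ and extends by $\F_q$-linearity, the map $\phi(f)=L_f(\alpha)$ has kernel equal to the ideal $(D)$ (you might state explicitly that the kernel is closed under multiplication by $x$ because $L_{xf}(\alpha)=L_f(\alpha)^q$, which is what makes it an ideal and hence principal), and the mutual divisibility $v\mid D$ and $D\mid v$ correctly yields $\deg G=n-\deg D$.
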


\subsection{Freeness and Characters.} We present the concept of \textit{freeness}, introduced in Carlitz \cite{carlitz} and Davenport \cite{davenport1},
and refined in Lenstra and Schoof (see \cite{lenstra}). This 
concept is useful in the construction of certain characteristic functions over finite fields.

\begin{definition}
\begin{enumerate}
\item[(a)] Let $m \, | \, (q^n-1)$, we say that $\alpha \in \mathbb{F}_{q^n}^*$ is $m$-free
if, for every $d \, | \, m$
and $\beta \in \F_{q^n}$, $\alpha= \beta^d$
implies that $d = 1$.
\item[(b)] Let $M \, | \, (x^n-1)$, we say that $\alpha \in \mathbb{F}_{q^n}$ is $M$-free
if, for every $h \, | \, M$
and $\beta \in \F_{q^n}$, $\alpha= L_h(\beta)$
implies that $h = 1$.
\end{enumerate}
\end{definition}

It is well known that an element $ \alpha \in \mathbb{F}_{q^n}^*$ is primitive if and only if $\alpha$ is $(q^n-1)$-free and $\alpha \in \mathbb{F}_{q^n}$ is normal if and only if $\alpha$ is $(x^n-1)$-free.

Also, from the definition, we have that if $\alpha$ is $m$-free then $\alpha$ is $e$-free, for any $e \, | \, m$ (analogous result for polynomial).











Following the notation in \cite{CH}, we can characterize the freeness of an element. For the multiplicative part: a multiplicative character $\eta$ 
of $\F_{q^n}^*$ is a
group homomorphism of $\F_{q^n}^*$ to $\mathbb{C}^*$,
whose order is the least positive integer $d$ such that
$\eta (\alpha)^d=1$ for any $\alpha \in \F_{q^n}^*$.
Let $m$ be a divisor of $q^n-1$ and let
$\int \limits_{d|m} \eta_d$ denote the sum
$ \sum_{d|m} \frac{\mu(d)}{\varphi(d)} \sum_{(d)} \eta_d$,
where $\eta_d$ is a multiplicative character of $\F_{q^n}$, and the sum
$ \sum_{(d)} \eta_d$ runs over all multiplicative characters of order $d$.
It is known that there exist $\varphi(d)$ of those characters.








For the additive part: if $p$ is the characteristic of $\F_q$, 
for $\alpha \in \F_{q^n}$,
let $\chi_\alpha : \F_{q^n} \longrightarrow \mathbb{C}$
be the additive character defined by
$$
\chi_\alpha (\beta) = e^{\frac{2\pi i}{p} \mathop{\rm{Tr}}_{q^n/p}(\alpha \beta)},
\quad \beta \in \F_{q^n},
$$
where $\mathop{\rm{Tr}}_{q^n/p}$ is the trace function of $\F_{q^n}$ over $\F_p$.
It is well known that any additive character  of $\F_{q^n}$ is of this form.
We say that the additive character $\chi_\alpha$ has $\F_q$-order $D$
if $\alpha$ has $\F_q$-order $D$.
We use the notation $\int \limits_{D|T} \chi_{\delta_D}$ to represent
$ \sum_{D|T} \frac{\mu_q(D)}{\Phi_q(D)} \sum_{(\delta_D)} \chi_{\delta_D}$ where $\chi_{\delta_D}$ runs through all characters of $\F_q$-order $D$. It is known that there exist $\Phi_q(D)$ of those characters.





For each divisor $m$ of $q^n-1$ and each monic divisor  $T \in \mathbb{F}_q[x]$ of $x^n-1$, set $\theta(m)= \dfrac{\varphi(m)}{m}$ and $ \Theta(T)= \frac{\Phi_q(T)}{q^{\deg(T)}}$.

\begin{proposition}\label{caracteristica-mlivre}
Let $m$ be a divisor of $q^n-1$ and $T \in \F_q[x]$ be a monic divisor of $x^n-1$.
For any $\alpha \in \F_{q^n}$ we have

\begin{enumerate}
\item[(i)]  $$w_m(\alpha) = 
\theta(m) \int_{d|m} \eta_d(\alpha)
=\left\{
\begin{array}{ll}
1, \quad & \text{if } \alpha \text{ is } m\text{-free}, \\
0,            & \text{otherwise.}
\end{array}
\right.
$$

\item[(ii)] $$
\Omega_T(\alpha)=
\Theta(T) \int_{D|M} \chi_D(\alpha)
=\left\{
\begin{array}{ll}
1, \quad & \text{if } \alpha \text{ is } T\text{-free}, \\
0,            & \text{otherwise.}
\end{array}
\right.
$$
\end{enumerate}
\end{proposition}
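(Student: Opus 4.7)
The plan is to verify both identities by combining character orthogonality with Möbius-style inversion. For part (i), I would first dispose of the degenerate case $\alpha=0$: every nontrivial multiplicative character vanishes at $0$, so only the $d=1$ term survives and the expression collapses to $\theta(m)\cdot 1 = \theta(m)$, matching the claim. For $\alpha\in\F_{q^n}^*$ I would exploit multiplicativity: by Proposition \ref{obs s_free}(3), $m$-freeness depends only on the squarefree radical of $m$, and both $\theta(m)$ and $\int_{d|m}\eta_d(\alpha)$ factor across the prime divisors of $m$ (the inner sum because a character of order $d=p_1\cdots p_r$ decomposes canonically as a product of characters of prime order $p_i$). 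It therefore suffices to verify the identity when $m=p$ is prime, where the computation reduces to $\sum_{\text{ord}(\eta)=p}\eta(\alpha) = \sum_{\eta^p=1}\eta(\alpha)-1$, which by orthogonality equals $p-1$ when $\alpha$ is a $p$-th power in $\F_{q^n}^*$ and $-1$ otherwise. Plugging in gives $w_p(\alpha)=0$ in the first case and $w_p(\alpha)=1$ in the second, which is exactly the required indicator.

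For part (ii) the same strategy applies in the additive setting. The analogue of the set of $d$-th powers is $\text{Im}\,L_{(x^n-1)/D}$, which by Lemma \ref{kernel} equals $\text{Ker}\,L_D$; thus an element lies in this image exactly when its $\F_q$-order divides $D$. The key orthogonality fact I would invoke is that $\sum_\chi \chi(\alpha) = q^{\deg D}$ if $\alpha\in\text{Ker}\,L_D$ and $0$ otherwise, where the sum runs over all additive characters whose $\F_q$-order divides $D$. Using the polynomial Möbius function $\mu_q$ one then isolates characters of $\F_q$-order exactly $D$. After Proposition \ref{obs s_free 2} reduces the question to the squarefree part of $T$, I would argue multiplicatively across the irreducible factors of $T$, reducing to the case $T=P$ with $P$ an irreducible divisor of $x^n-1$; the arithmetic with $\Phi_q(P)=q^{\deg P}-1$ then mirrors the prime case above.

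The main obstacle, and the reason the additive case deserves its own verification, is ensuring that the $\F_q[x]$-module structure on $\F_{q^n}$ given by the Frobenius is compatible with the multiplicativity argument. The decisive observation is that distinct irreducible factors $P_i, P_j$ of $x^n-1$ yield coprime linearized polynomials $L_{P_i}, L_{P_j}$, so $\F_{q^n}$ splits as an internal direct sum of the subspaces $\text{Ker}\,L_{P_i^{a_i}}$ by a Chinese-remainder-type argument applied to $\F_q[x]/(x^n-1)$ acting through linearized polynomials. This splitting produces the required factorization of $\Omega_T(\alpha)$ across the irreducible factors of $T$, after which the proof concludes as in~(i).
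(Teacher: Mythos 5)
The paper does not actually prove this proposition: it cites \cite[Section 5.2]{knormal} and \cite[Theorem 2.15]{lucas} for both character-sum identities and only verifies the one new case $w_m(0)=\theta(m)$, by the same convention you use ($\eta(0)=0$ for nontrivial $\eta$, $\eta(0)=1$ for the trivial one). So you are supplying an argument the paper delegates. Your part (i) is a correct reconstruction of the standard proof: the expression factors over the primes dividing $m$ because $\mu$ and $\varphi$ are multiplicative, a character of squarefree order $p_1\cdots p_r$ splits as a product of characters of prime order, and $\theta(m)=\prod_{p\mid m}\theta(p)$, so $w_m=\prod_{p\mid m}w_p$ on $\F_{q^n}^*$; the orthogonality computation for $m=p$ then gives exactly the indicator of not being a $p$-th power.

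Part (ii), however, rests on an orthogonality statement that is false as written, and the error is visible from a dimension count. You sum over the $q^{\deg D}$ additive characters $\chi_\beta$ with $\beta\in\mathrm{Ker}\,L_D$ and claim the sum equals $q^{\deg D}$ precisely when $\alpha\in\mathrm{Ker}\,L_D$. But the locus where a sum over a subgroup $H$ of characters is nonzero is the annihilator of $H$, of size $q^n/|H|=q^{n-\deg D}$, whereas $\mathrm{Ker}\,L_D$ has size $q^{\deg D}$; already for $n=2$, $D=x-1$ the sum over $\chi_\beta$, $\beta\in\F_q$, detects $\mathrm{Tr}_{q^2/q}(\alpha)=0$, not $\alpha\in\F_q$. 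The correct statement is that the sum is $q^{\deg D}$ exactly when $\alpha\in(\mathrm{Ker}\,L_D)^{\perp}=\mathrm{Im}\,L_{\widetilde D}=\mathrm{Ker}\,L_{(x^n-1)/\widetilde D}$, where $\widetilde D$ is the monic reciprocal of $D$ (the adjoint of $L_x$ for the trace pairing is $L_{x^{n-1}}=L_{x^{-1}}$, which is where the reciprocal enters). Relatedly, your identification of the ``power sets'' is inverted: $T$-freeness requires $\alpha$ to avoid $\mathrm{Im}\,L_P=\mathrm{Ker}\,L_{(x^n-1)/P}$ (codimension $\deg P$) for each irreducible $P\mid T$, not the small subspaces $\mathrm{Ker}\,L_P=\mathrm{Im}\,L_{(x^n-1)/P}$. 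With the corrected orthogonality your computation goes through verbatim and yields the indicator of $\widetilde T$-freeness; one then either defines the $\F_q$-order of a character $\chi$ by the triviality of $\chi\circ L_D$ (as in \cite{knormal}, which absorbs the reciprocal into the definition) or notes that $D\mapsto\widetilde D$ is a degree-preserving bijection of the divisors of $x^n-1$, so the identity as used in the paper is unaffected. Your final CRT-splitting paragraph is fine but does not repair this step.
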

\begin{proof}

See \cite[section 5.2]{knormal} or \cite[Theorem 2.15]{lucas}. 
Extending the multiplicative characters $\eta$ to $0 \in \F_{q^n}$ by setting
$\eta(0)=0$, we can easily see that
$w_m(0)=0$.
\end{proof}

\subsection{Estimates.} To finish this section, we present some estimates that are used along the next sections.

We will need the following result, which is modeled after
\cite[Lemma 3.3]{CH} and
\cite[Lemma 4.1]{Kapetanakis-Reis} and, like these results, is proved using the 
multiplicativity of the function $W(\cdot)$ and the fact that if a positive 
integer $M$ has $l$ distinct prime divisors then $W(M) = 2^l$.

\begin{lemma}\label{cota-t}
	Let $M$ be a positive integer and $t$ be a positive real number.
	Then
	$W(M)
	\leq A_t \cdot M^{\frac{1}{t}}$,
	where
	$$
	A_t=\prod_{\substack{\wp^{\alpha_{\wp}} < 2^t \\ \wp \text{ is prime}
			\\ \wp^{\alpha_{\wp}} \mid M}}
	\frac{2}{\sqrt[t]{\wp^{\alpha_{\wp}}}},
	$$
and for any prime $\wp$, $\alpha_{\wp}$ is defined as the largest positive integer such that
$\wp^{\alpha_{\wp}} < 2^t$ and $\wp^{\alpha_{\wp}} \mid M$.
\end{lemma}
\begin{proof}
Let $M=p_1^{\alpha_1}\cdots p_l^{\alpha_l}$, so that $W(M)=2^{l}$. 
If $\wp$ is a prime such that $\wp>2^t$ then $\frac{2}{\sqrt[t]{\wp}}<1$. 
Let $\beta_i\leq \alpha_i$ be the greatest integer such that $p_i^{\beta_i} \leq 2^t$,
thus
	$$\frac{W(M)}{M^{\frac{1}{t}}}
	=\frac{2^{l}}{\sqrt[t]{p_1^{\alpha_1}}\cdots\sqrt[t]{p_l^{\alpha_l}}}
	\leq\prod_{i=1}^{l} \frac{2}{\sqrt[t]{p_i^{\beta_i}}}
	\leq
\prod_{\substack{\wp^{\alpha_{\wp}} < 2^t \\ \wp \text{ is prime}
		\\ \wp^{\alpha_{\wp}} \mid M}}
\frac{2}{\sqrt[t]{\wp^{\alpha_{\wp}}}}
=
A_t.
	$$
	The result follows immediately.
\end{proof}

Now, we present some estimates involving sum of characters:

\begin{lemma}\cite[Theorem 5.41]{LN}\label{lema cota 1}
Let $\eta$ be a multiplicative character of $\mathbb{F}_{q^n}$ of order $r>1$ and $f \in \F_{q^n}[x]$ be a monic polynomial of positive degree such that $f$ is not of the form $g(x)^r$ for some $g \in \mathbb{F}_{q^n}[x]$ with degree at least 1. Let $e$ be the number of distinct roots of $f$ in its splitting field over $\mathbb{F}_{q^n}$. For every $a \in \mathbb{F}_{q^n}$,
$$
\left| \sum_{\alpha \in \mathbb{F}_{q^n}} \eta(a f(\alpha)) \right| \leq (e-1)q^{n/2}.
$$
\end{lemma}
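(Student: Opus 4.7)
The plan is to derive the bound from Weil's theorem on the zeta function of a curve over a finite field, by introducing an auxiliary cyclic cover whose point count packages the character sum. Set $Q = q^n$ and extend $\eta$ to $\mathbb{F}_Q$ by declaring $\eta(0) := 0$, and write $S = \sum_{\alpha \in \mathbb{F}_Q} \eta(af(\alpha))$ for the sum of interest. The first step is to introduce the affine plane curve $C : y^r = af(x)$ over $\mathbb{F}_Q$ and to observe that the hypothesis that $f$ is not of the form $g^r$, together with $r > 1$, forces $y^r - af(x)$ to be absolutely irreducible in $\overline{\mathbb{F}_Q}[x,y]$. This is the standard Kummer-type irreducibility criterion, and it is precisely what makes Weil's theorem applicable.

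Next I would count the affine $\mathbb{F}_Q$-rational points of $C$ by fibering over the $x$-coordinate. For each $\alpha \in \mathbb{F}_Q$, the number of $r$-th roots of $af(\alpha)$ inside $\mathbb{F}_Q$ equals $\sum_{j=0}^{r-1} \eta^j(af(\alpha))$ when $af(\alpha) \neq 0$, and equals $1$ when $af(\alpha) = 0$. Summing over $\alpha$ and isolating the trivial-character contribution yields an identity of the shape
$$
N_{\mathrm{aff}}(C) \;=\; Q \;+\; \sum_{j=1}^{r-1} \sum_{\alpha \in \mathbb{F}_Q} \eta^j\bigl(af(\alpha)\bigr),
$$
so that $S$ is precisely the $j=1$ inner sum. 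The third step invokes the $L$-function refinement of Weil's theorem: for a smooth projective model $\widetilde{C}$ of $C$, the cyclic action of $\mathrm{Gal}(\widetilde{C}/\mathbb{P}^1)$ decomposes the numerator of $\zeta_{\widetilde{C}}(T)$ as $\prod_{j=1}^{r-1} L(T, \eta^j)$, where each factor is a polynomial with all inverse roots of absolute value $\sqrt{Q}$. Comparing the isotypic components in the point-count identity above (after tracking the easily controlled contribution from points lying above $0$ and $\infty$) expresses $\sum_\alpha \eta^j(af(\alpha))$ as a sum of $\deg L(T,\eta^j)$ algebraic numbers of modulus $\sqrt{Q}$, giving
$$
\Bigl| \sum_{\alpha \in \mathbb{F}_Q} \eta^j\bigl(af(\alpha)\bigr) \Bigr| \;\leq\; \deg L(T,\eta^j) \cdot \sqrt{Q}.
$$

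The main obstacle is the combinatorial-geometric step of proving $\deg L(T, \eta) \leq e-1$. Applying Riemann--Hurwitz to the degree-$r$ cover $\widetilde{C} \to \mathbb{P}^1$ and then distributing the total genus contribution across the $r-1$ nontrivial characters, one writes each $\deg L(T,\eta^j)$ in terms of the local ramification indices above the branch locus; branching occurs only at the $e$ distinct roots of $f$ and possibly at $\infty$. A careful local analysis at $\infty$ -- where the hypothesis that $f$ is not an $r$-th power is used once more, now to show that the character $\eta$ contributes one fewer unit to the naive count -- reduces the bound from $e$ down to $e-1$. Specializing the previous inequality to $j=1$ then yields the claimed estimate $|S| \leq (e-1)\,q^{n/2}$.
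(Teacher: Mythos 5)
The paper does not prove this lemma at all: it is quoted verbatim from Lidl--Niederreiter (Theorem 5.41), whose own argument runs through $L$-functions of multiplicative characters of $\mathbb{F}_{q^n}[x]$ and Gaussian sums rather than through the geometry of a Kummer cover. Your geometric route is a standard and legitimate alternative, and steps 2--4 (the fibre-wise point count, the factorization of the numerator of the zeta function into $\prod_{j=1}^{r-1}L(T,\eta^j)$, and the conductor/Riemann--Hurwitz bookkeeping giving $\deg L(T,\eta)\leq e-1$) are the right skeleton, even though the last of these is only sketched and the stated role of the hypothesis at infinity is not quite where the ``$-1$'' actually comes from (it comes from $\deg L=\deg\mathfrak{f}-2$ with at most $e+1$ tamely ramified places).

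There is, however, a genuine gap in your first step. The claim that ``$f$ is not of the form $g(x)^r$'' forces $y^r-af(x)$ to be absolutely irreducible is false for composite $r$. By Capelli's criterion, absolute irreducibility requires that $af$ not be a $p$-th power in $\overline{\mathbb{F}_{q^n}}(x)$ for \emph{every} prime $p\mid r$ (plus a side condition when $4\mid r$), which is strictly stronger than not being an $r$-th power. Concretely, take $r=4$ and $f=g^2$ with $g$ squarefree of positive degree: then $f$ satisfies the hypothesis of the lemma, yet $y^4-af=(y^2-\sqrt{a}\,g)(y^2+\sqrt{a}\,g)$ over $\overline{\mathbb{F}_{q^n}}$, so the curve is reducible and Weil's theorem does not apply to it. The statement itself survives, because in that situation $\eta(af(\alpha))=\eta(a)\,\eta^2(g(\alpha))$ and one may rerun the argument with the character $\eta^2$ of order $2$ and the polynomial $g$, which has the same number $e$ of distinct roots; but your proof needs this preliminary reduction (replace $(\eta,f)$ by $(\eta^d,f_1)$ where $f=f_1^{\,d}$ and $d$ is chosen so that the resulting Kummer equation is absolutely irreducible) made explicit. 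Without it, the argument as written fails whenever $r$ is composite and the multiplicities of the roots of $f$ share a nontrivial common factor with $r$.
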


\begin{lemma}\cite[Theorem 2G]{schmidt}\label{lema cota 2}
Let $\eta$ be a multiplicative character of $\mathbb{F}_{q^n}$ of order $d \neq 1$ and
$\chi$ be a non-trivial additive character of $\mathbb{F}_{q^n}$.
If $F,G \in \mathbb{F}_{q^n}[x]$ are such that $F$ has exactly $m_1$ roots and
$\deg(G)=m_2$ with $gcd(d, deg(F))=gcd(m_2,q)=1$, then
$$
\left|
\sum_{\alpha  \in \mathbb{F}_{q^n}} \eta(F(\alpha)) \chi(G(\alpha))
 \right| \leq (m_1+m_2-1)q^{n/2}.
$$
\end{lemma}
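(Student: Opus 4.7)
The plan is to interpret the character sum as a point count on an algebraic curve over $\F_{q^n}$ and then apply the Weil bound. Write $p$ for the characteristic of $\F_{q^n}$, normalise the additive character as $\chi(\beta) = \psi(\mathrm{Tr}_{q^n/p}(a\beta))$ for the canonical additive character $\psi$ of $\F_p$ and some $a \in \F_{q^n}^*$, and absorb $a$ into $G$. Consider the affine curve $\mathcal{C}$ over $\F_{q^n}$ defined by
\[
y^d = F(x), \qquad z^p - z = G(x),
\]
i.e.\ the fibered product over the $x$-line of a Kummer cover of degree $d$ and an Artin--Schreier cover of degree $p$. Standard orthogonality of additive and multiplicative characters expresses the sum $\sum_{\alpha \in \F_{q^n}} \eta(F(\alpha))\chi(G(\alpha))$ as a linear combination of point counts $\#\mathcal{C}_{j,k}(\F_{q^n})$ on twisted curves obtained by replacing $\eta$ by $\eta^j$ and $\chi$ by $\chi^k$, with explicit coefficients of modulus one.

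The key technical step is to verify that each twisted curve $\mathcal{C}_{j,k}$ (with $j$, $k$ nonzero) is absolutely irreducible. For the Kummer factor, the hypothesis $\gcd(d, \deg F) = 1$ together with $F$ having $m_1$ distinct roots rules out $F$ being a $d$th power in $\overline{\F_{q^n}}(x)$: otherwise the valuation of $F$ at infinity, equal to $-\deg F$, would be divisible by $d$. For the Artin--Schreier factor, $\gcd(\deg G, q) = 1$ implies $\gcd(\deg G, p) = 1$, which rules out any relation $G = H^p - H + c$ with $H \in \overline{\F_{q^n}}[x]$ and constant $c$. Linear disjointness of a Kummer and an Artin--Schreier extension of a rational function field follows from their distinct (tame vs.\ wild) ramification at infinity, so the fibered product remains absolutely irreducible and well behaved over $\F_{q^n}$.

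With absolute irreducibility in hand, I would invoke the Weil bound $|\#\mathcal{C}_{j,k}(\F_{q^n}) - q^n| \leq 2 g_{j,k}\, q^{n/2} + O(1)$ and estimate the genus $g_{j,k}$ via Riemann--Hurwitz: the Kummer part contributes at most $(d-1)(m_1-1)/2$ from ramification over the $m_1$ roots of $F$ and at $\infty$, while the Artin--Schreier part contributes at most $(p-1)(m_2-1)/2$ per Kummer sheet from wild ramification at $\infty$. The main obstacle is precisely this wild ramification, where a naive Riemann--Hurwitz count loses a factor of $p$ and yields a bound of order $p(m_2-1)q^{n/2}$ rather than $(m_1+m_2-1)q^{n/2}$. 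The cleaner route, followed by Schmidt in \cite{schmidt}, bypasses geometry entirely via Stepanov's polynomial method: construct a low-degree auxiliary polynomial in $x, y, z$ vanishing to high order at every $\F_{q^n}$-point contributing maximally to the sum, using derivations adapted simultaneously to the $d$th-power and Artin--Schreier structures, and extract the bound from a counting argument on the degree. I would follow this elementary route to obtain the precise constant $(m_1 + m_2 - 1)$ without having to compute the Swan conductor or normalise a singular model at infinity.
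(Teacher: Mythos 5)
The paper offers no proof of this lemma: it is quoted verbatim as an external result, namely \cite[Theorem 2G]{schmidt}, and your proposal ultimately lands in the same place, since after sketching the curve-theoretic route you defer the actual argument to Schmidt's Stepanov-method proof. In that sense you have reproduced the paper's ``approach'' (a citation) rather than supplied an independent proof; the geometric sketch you give is a reasonable heuristic but, as you yourself note, bounding the character sum by the point count of the full fibered product $y^d=F(x)$, $z^p-z=G(x)$ overshoots, and the correct repair is not Stepanov but the $L$-function formalism: the sum $\sum_\alpha \eta(F(\alpha))\chi(G(\alpha))$ is the trace of Frobenius on a single isotypic component whose $L$-polynomial has degree $\deg \mathfrak{f}-2=(m_1)+(m_2+1)-2=m_1+m_2-1$ by the conductor formula (each distinct root of $F$ contributing $1$ tamely, the place at infinity contributing $m_2+1$ wildly since $\gcd(m_2,p)=1$), after which Weil's Riemann hypothesis gives exactly the stated constant. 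Either completion (Schmidt's elementary one or the conductor computation) is legitimate, but as written your text establishes neither and so should be read as a pointer to the literature, exactly as the paper intends.
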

%
%
%
%
%

\begin{lemma}[\cite{katz}, Theorem 1]\label{cotaenFq}
Let $F$ be a finite field,
let $n \geq 1$ be an integer and let $E$ be an extension field of $F$ of degree $n$.
Let $\chi$ be any nontrivial complex-valued multiplicative character of $E^{\times}$ (extended by zero to all of E ), and $x$ in $E$ any element that generates $E$
over $F$. Then
$$
\left| \sum \limits_{t \in F} \chi(t-x) \right| \leq (n-1) \sqrt{\# (F)}
$$
\end{lemma}

\section{General results}\label{sectiongen}

In \cite{lucas}, Reis gives a method to construct $k$-normal elements: \textit{let $\beta \in \F_{q^n}$ be a normal element and $f \in \F_q[x]$ be a divisor of $x^n-1$ of degree $k$, then $\alpha= L_f(\beta)$ is $k$-normal} (see \cite[Lemma 3.1]{lucas}).
From Theorem \ref{counting-knormal}, we also know that there exists a $k$-normal element in $\F_{q^n}$ if and only if $x^n-1$ has a divisor of degree $n-k$ (or, equivalently, a divisor of degree $k$). 
So, if $x^n-1$ has a divisor of degree $k$ and
\begin{equation}\label{eq.cond.knormal}
q^{\frac{n}{2}-k} \geq W(q^n-1) W(x^n-1),
\end{equation}
then there exists a primitive $k$-normal element in $\F_{q^n}$ (see \cite[Theorem 3.3]{lucas}).

When $k=2$, it is easy to prove that the existence of primitive $2$-normal elements is only possible for $n \geq 4$ (see Theorem \ref{equiv-knormal}). Note that we cannot use condition \ref{eq.cond.knormal}
for the case $n=4$ because the exponent on the left side is equal to zero, so we need a different approach in that case. We will discuss this case in Section \ref{section=4}. First, we are going to use the ideas of Huczynska \cite{knormal} and Reis \cite{lucas}
to get a more general result than condition \ref{eq.cond.knormal}.

From Theorem \ref{counting-knormal} we know that the existence of a factor of degree $k$ of $x ^ n-1$ is a necessary and sufficient condition for the existence of $k$-normal elements.
Thus,
the following result is very important to know the number of these factors.

\begin{lemma}\label{divisores de x^n-1} Let $q$ be a prime power and 
let $n$ be a positive integer prime to $q$.
Let $I_n(r)$ be the number of irreducible monic factors of $x^n-1$ with degree $r$ over $\F_q$. We have
$$
I_n(r)= \dfrac{1}{r} \sum_{d|r} t_d \cdot \mu \left( \frac{r}{d} \right),
$$
where $t_d=\gcd(q^d-1,n)$.
\end{lemma}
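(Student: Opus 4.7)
The plan is to count the roots of $x^n-1$ that lie in $\F_{q^r}$ in two different ways and then apply Möbius inversion to recover $I_n(r)$. Since $\gcd(n,q)=1$, the polynomial $x^n-1$ is separable, hence its roots in $\overline{\F_q}$ form the cyclic group $\mu_n$ of $n$-th roots of unity, and every root has a well-defined minimal polynomial over $\F_q$ that is one of the irreducible monic factors of $x^n-1$.

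First, I would compute the number of elements of $\mu_n$ lying in $\F_{q^d}$ by noting that $\alpha \in \overline{\F_q}^*$ belongs to $\F_{q^d}$ if and only if $\alpha^{q^d-1}=1$. Thus the roots of $x^n-1$ that lie in $\F_{q^d}$ are exactly the elements $\alpha$ with $\alpha^n = 1$ and $\alpha^{q^d-1}=1$, which form the unique cyclic subgroup of $\F_{q^d}^*$ of order $\gcd(q^d-1,n) = t_d$. Next, I would count the same set by grouping roots according to the degree of their minimal polynomial: since an irreducible polynomial of degree $s$ has all of its $s$ roots in $\F_{q^d}$ precisely when $s \mid d$, each such factor of $x^n-1$ contributes exactly $s$ roots to $\F_{q^d}$. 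Equating the two counts yields
$$
t_d \;=\; \sum_{s \mid d} s \cdot I_n(s).
$$

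Finally, applying the classical Möbius inversion formula to this identity gives
$$
r \cdot I_n(r) \;=\; \sum_{d \mid r} t_d \cdot \mu\!\left(\tfrac{r}{d}\right),
$$
which, upon dividing by $r$, is exactly the stated formula. I do not anticipate any real obstacle: the content of the argument is entirely in the double-counting step for $t_d$, after which the conclusion is a direct application of a standard number-theoretic inversion.
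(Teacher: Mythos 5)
Your proof is correct and takes essentially the same approach as the paper: both arguments count the roots of $x^n-1$ lying in $\F_{q^d}$ to obtain $t_d=\gcd(q^d-1,n)$, derive the divisor-sum identity $t_d=\sum_{s\mid d} s\, I_n(s)$ by grouping roots according to the degree of their minimal polynomial, and conclude by M\"obius inversion (which the paper phrases as inclusion--exclusion).
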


\begin{proof}

Let $\alpha$ be a primitive element in $\F_{q^r}^*$. 
For $0 \leq s \leq q^r-1$, we have
$\left( \alpha^s \right)^n=1$ if and only if $q^r-1$ divides $sn$. Since $\frac{q^r-1}{t_r}$ and $\frac{n}{t_r}$ are coprimes,
we get that
$\left(\alpha^s \right)^n=1$ if and only if $\frac{q^r-1}{t_r}$ divides $s$. Therefore, there exist $t_r$ possibilities for $s$, which implies that 
the number of elements $\alpha$ in $\F_{q^r}^*$ with $\alpha^n=1$ is 
$t_r$.

Observe that for each irreducible polynomial 
of degree $r$
defined over $\F_{q}$ which divides $x^n -1$, there are $r$ elements $\alpha$ in $\F_{q^r}$ such that $\alpha \notin \F_{q^d}$ for any $d<r$, with $\alpha^n=1$. So, from the 
definition of $I_n(r)$, the number of elements $\alpha$ in $\F_{q^r}$
which are not in $\F_{q^d}$ for $d<r$, with $\alpha^n=1$ is $r \cdot I_n(r)$.
Note that if $\alpha \in \F_{q^r} \cap \F_{q^d}$ and $d<r$, then $d \mid r$.
We conclude by using the inclusion-exclusion principle.
\end{proof}

\begin{lemma}\label{factor2}
Let $q$ be a prime power and let $n$ be a positive integer.
There exists a $2$-normal element in $\mathbb{F}_{q^n}$ over $\mathbb{F}_q$ if and only if $\gcd (q^3-q,n) \neq 1$.
\end{lemma}

\begin{proof}
The result follows directly from Theorem \ref{counting-knormal}
and Lemma \ref{divisores de x^n-1}.
%
%
%
%
\end{proof}


For the purpose of proving \textit{The Primitive Normal Basis Theorem} without computational calculations, 
in \cite{CH}, the authors defined, for $m|(q^n-1)$ and $g|(x^n-1)$, the number $N(m,g)$ of non-zero elements of $\F_{q^n}$ that are both $m$-free and $g$-free. So they needed to prove that $N(q^n-1,x^n-1)$ is positive. Similarly, we define:

\begin{definition}\label{def-NfTm}
Let $f,T \in \F_q[x]$ be divisors of $x^n-1$ such that $\deg f = k$
and let $m \in \mathbb{N}$ be a divisor
of $q^n-1$.
We denote by $N_f(T,m)$ the number of $T$-free elements $\alpha \in \mathbb{F}_{q^n}$ such that $L_f(\alpha)$ is $m$-free.	
\end{definition}


The following theorem generalizes \cite[Theorem 3.3]{lucas}
using Definition \ref{def-NfTm}.



\begin{theorem}\label{condicao em N-f}
Let $f,T \in \F_q[x]$ be divisors of $x^n-1$
such that $\deg f = k$
and let $m \in \mathbb{N}$ be a divisor
of $q^n-1$.
We have
$N_f(T,m)  >  \theta(m) \Theta(T) \big( q^n - q^{n/2+k}W(m)W(T) \big)$.
In particular,
if $q^{n/2-k} \geq W(m)W(T)$ then $N_f(T,m)>0$.
\end{theorem}
\begin{proof}
We have that 
\begin{eqnarray}
N_f(T,m) & = & \sum_{\alpha \in \mathbb{F}_{q^n}} \Omega_T(\alpha) \cdot w_m(L_f(\alpha)) 
\nonumber
\\
& = & \theta(m) \Theta(T) \sum_{\alpha \in \mathbb{F}_{q^n}} \displaystyle \int_{d|m} \displaystyle \int_{D|T} \eta_d(L_f(\alpha)) \chi_{\delta_D}(\alpha) .
\nonumber
\end{eqnarray}
If we denote the Gauss sum $S_f(\eta_d,\chi_{\delta_D})= \sum_{\alpha \in \mathbb{F}_{q^n}} 
\eta_d(L_f(\alpha)) \chi_{\delta_D}(\alpha)$, we can write
$$
\dfrac{N_f(T,m)}{\theta(m) \Theta(T)} = S_0+S_1+S_2+S_3,
$$
where $S_0=S_f(\eta_1,\chi_0)$, $S_1=\displaystyle\int \limits_{\substack{D|T\\ D \neq 1}} S_f(\eta_1, \chi_{\delta_D})$, $S_2=\displaystyle\int \limits_{\substack{d|m\\ d \neq 1}} S_f(\eta_d, \chi_{0})$ and 
$$
S_3=\displaystyle\int \limits_{\substack{D|T\\ D \neq 1}} \displaystyle\int \limits_{\substack{d|m\\ d \neq 1}} S_f(\eta_d, \chi_{\delta_D}) .
$$
We observe that
$$
S_0  =  \sum_{\alpha \in \mathbb{F}_{q^n}}  \eta_1(L_f(\alpha)) \chi_{0}(\alpha)
  =  \sum_{\alpha \in \mathbb{F}_{q^n} \backslash \text{Ker}\, L_f}  1
= q^n - q^k 
$$
and
$$
S_1  = 
\sum_{\alpha \in \mathbb{F}_{q^n} \backslash \text{Ker}\, L_f} 
 \sum_{\substack{D|T\\ D \neq 1}} \frac{\mu_q(D)}{\Phi_q(D)} \sum_{(\delta_D)}
  \chi_{\delta_D}(\alpha)
=
-
\sum_{\alpha \in \text{Ker}\, L_f} 
\sum_{\substack{D|T\\ D \neq 1}} \frac{\mu_q(D)}{\Phi_q(D)} \sum_{(\delta_D)}
\chi_{\delta_D}(\alpha),
$$
which implies that $|S_1| \leq q^k \left( W(T) - 1\right)$, since
$|\chi_{\delta_D}(\alpha)| \leq 1$.

Now we would like
good estimates of the sums $S_2$ and $S_3$. We have $f(x)=\sum_{i=0}^k a_i x_i$. One can see that the formal derivative of the $q-$associate of $f$ is $a_0 \neq 0$ (since $f$ divides $x^n-1$, $f$ is not divisible by $x$), hence $L_f$ does not have repeated roots and is not of the form 
$G(x)^r$
for any
$G(x) \in \mathbb{F}_{q^n}[x]$ and $r>1$. Therefore, by Lemma \ref{lema cota 1}, we have, for each divisor $d \neq 1$ of $q^n-1$:
$$
|S_f(\eta_d,\chi_0)| = \left| \sum_{\alpha\in \mathbb{F}_{q^n}} \eta_d(L_f(\alpha)) \right| \leq (q^k-1)q^{n/2} < q^{n/2+k}.
$$
From \ref{lema cota 2}, we conclude that, for each divisor $D \neq 1$ of $x^n-1$ and each divisor $d \neq 1$ of $q^n-1$,
$$
\left| S_f(\eta_d, \chi_{\delta_D}) \right| = \left| \sum_{\alpha \in \mathbb{F}_{q^n}} \eta_d(L_f(\alpha))\chi_{\delta_D}(\alpha)\right| \leq (q^k+1-1)q^{n/2}=q^{n/2+k}.
$$
Combining the previous bounds, we have the following inequality:
\begin{align}
N_f(T,m) & \geq  
\theta(m) \Theta(T)  
\left(
S_0 - |S_1| - |S_2| - |S_3|
\right)
\nonumber \\
& \geq 
\theta(m) \Theta(T) 
\left[ q^n - q^k  -  q^k (W(T)-1) -  q^{n/2+k}  (W(m)-1)W(T) \right] 
\nonumber \\
& >  \theta(m) \Theta(T) \big( q^n - q^{n/2+k}W(m)W(T) \big).
\nonumber
\end{align}
Therefore, if $W(m)W(T) \leq q^{n/2-k}$ we have $N_f(T,m)>0$.
\end{proof}

To use Theorem \ref{condicao em N-f}, we need to have some knowledge about the factorization
of $m$ and $T$. Knowing some factors of these values, one can use the next proposition, which helps to decrease the estimates of the function $W$ by adding an offset factor. Before that, we present a
result which will be needed in what follows.

For any natural number $m$, $rad(m)$ denotes the largest
square-free factor of $m$ and for any
polynomial $T \in \F_q[x]$, $rad(T)$ denotes the square-free factor of $T$
of largest degree over $\F_q$.

The sieving technique from the next two results follows the ideas of \cite{CH}.

\begin{lemma}\label{divisors}
Let $f,T \in \F_q[x]$ be divisors of $x^n-1$
such that $\deg f = k$
and let $m \in \mathbb{N}$ be a divisor
of $q^n-1$.
Let $Q_1,\ldots , Q_s$ be irreducible polynomials and
let $p_1,\ldots , p_r$ be prime numbers such that
$rad(x^n-1)=rad(T)\cdot Q_1 \cdot Q_2 \cdots Q_s$ and
$rad(q^n-1)=rad(m) \cdot p_1 \cdot p_2 \cdots p_r$. We have that
\begin{equation}\label{sieve}
N_f(x^n-1,q^n-1) \geq \sum_{i=1}^r N_f(T,mp_i) + \sum_{j=1}^s N_f(T \cdot Q_j,m)-(r+s-1)N_f(T,m).
\end{equation}
\end{lemma}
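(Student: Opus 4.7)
\smallskip

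\noindent\textbf{Proof proposal.} The plan is to reduce everything to radicals and then apply a standard Bonferroni-type sieve inequality. First, by Proposition \ref{obs s_free}(3) an element $\alpha \in \F_{q^n}^*$ is $(q^n-1)$-free if and only if it is $rad(q^n-1)$-free, and by Proposition \ref{obs s_free 2}(b) an element of $\F_{q^n}$ is $(x^n-1)$-free if and only if it is $rad(x^n-1)$-free. Consequently $N_f(x^n-1,q^n-1)=N_f(rad(x^n-1),rad(q^n-1))$, so it suffices to bound the latter quantity from below.

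Next I would introduce the following sets. Let
\[
\mathcal{S}=\{\alpha\in\F_{q^n}:\alpha\text{ is }T\text{-free and }L_f(\alpha)\text{ is }m\text{-free}\},
\]
so $|\mathcal{S}|=N_f(T,m)$. For each $i=1,\dots,r$ let $\mathcal{A}_i\subseteq\mathcal{S}$ be the subset of those $\alpha$ for which $L_f(\alpha)$ is additionally $mp_i$-free; then $|\mathcal{A}_i|=N_f(T,mp_i)$. For each $j=1,\dots,s$ let $\mathcal{B}_j\subseteq\mathcal{S}$ be the subset of those $\alpha$ which are additionally $T\cdot Q_j$-free; then $|\mathcal{B}_j|=N_f(T\cdot Q_j,m)$. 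The key claim is that
\[
\bigcap_{i=1}^r\mathcal{A}_i\,\cap\,\bigcap_{j=1}^s\mathcal{B}_j
=\{\alpha\in\F_{q^n}:\alpha\text{ is }rad(x^n-1)\text{-free and }L_f(\alpha)\text{ is }rad(q^n-1)\text{-free}\}.
\]
For the multiplicative side, an element $\alpha\in\mathcal{S}$ lies in every $\mathcal{A}_i$ iff $L_f(\alpha)$ is $m$-free and $mp_i$-free for every $i$; by Proposition \ref{obs s_free}(2) this is equivalent to $L_f(\alpha)$ being $\mathrm{lcm}(mp_1,\dots,mp_r)$-free, which equals being $m\cdot p_1\cdots p_r$-free, and by Proposition \ref{obs s_free}(1),(3) this is in turn equivalent to being $rad(q^n-1)$-free. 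The polynomial side is analogous using Proposition \ref{obs s_free 2}.

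Finally I would apply the elementary sieve: if $\mathcal{C}_1,\dots,\mathcal{C}_N\subseteq\mathcal{S}$ are any subsets, then from
\[
\Bigl|\mathcal{S}\setminus\bigcap_{l=1}^N\mathcal{C}_l\Bigr|
=\Bigl|\bigcup_{l=1}^N(\mathcal{S}\setminus\mathcal{C}_l)\Bigr|
\leq\sum_{l=1}^N(|\mathcal{S}|-|\mathcal{C}_l|)
\]
one obtains $\bigl|\bigcap_l\mathcal{C}_l\bigr|\geq\sum_l|\mathcal{C}_l|-(N-1)|\mathcal{S}|$. Taking $N=r+s$ and the $\mathcal{C}_l$ to be the $\mathcal{A}_i$ and $\mathcal{B}_j$ gives exactly \eqref{sieve}.

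The only nontrivial step is the set-theoretic identification above; once that is in place the inequality is a one-line consequence of the union bound. I expect no hidden difficulty beyond carefully invoking the right parts of Propositions \ref{obs s_free} and \ref{obs s_free 2} to pass between $m$, $rad(m)$, $rad(q^n-1)$, and their polynomial analogues.
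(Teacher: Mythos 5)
Your proposal is correct and is essentially the paper's own argument: the paper counts the contribution of each $\alpha$ to the right-hand side (a fully free element contributes $r+s-(r+s-1)=1$, any other element contributes at most $0$), which is precisely your Bonferroni-type union bound in element-by-element form, with the same reduction to radicals via Propositions \ref{obs s_free} and \ref{obs s_free 2}. No gap; your write-up is just a slightly more explicit set-theoretic phrasing of the same sieve.
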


\begin{proof}
The left side of \eqref{sieve} counts every $\alpha \in \F_{q^n}$ for which $\alpha$ is normal and $L_f(\alpha)$ is primitive.
Observe that if $\alpha$ is normal and $L_f(\alpha)$ is primitive then
$\alpha$ is  $T \cdot Q_j$-free and $T$-free; and $L_f(\alpha)$
is $m p_i$-free and $m$-free, so $\alpha$ is  counted $r+s - (r+s-1)=1$
times on the right side of \eqref{sieve}.
For any other $\alpha \in \F_{q^n}$, we have that either $\alpha$ 
is not $T \cdot Q_j$-free for some $j \in \{ 1,\ldots , s\}$
or $L_f(\alpha)$ is not $m p_i$-free for some
$i \in \{ 1,\ldots , r\}$, so the right side of \eqref{sieve} is at most zero.
\end{proof}

\begin{proposition}\label{lema3.5}
Let $f,T \in \F_q[x]$ be divisors of $x^n-1$
such that $\deg f = k$
and let $m \in \mathbb{N}$ be a divisor
of $q^n-1$.
Let $Q_1,\ldots , Q_s$ be irreducible polynomials and
let $p_1,\ldots , p_r$ be prime numbers such that
$rad(x^n-1)=rad(T)\cdot Q_1 \cdot Q_2 \cdots Q_s$ and
$rad(q^n-1)=rad(m) \cdot p_1 \cdot p_2 \cdots p_r$.
Suppose that 
$\delta=1-\sum_{i=1}^{r}\frac{1}{p_i}-\sum_{j=1}^{s}\frac{1}{q^{\deg Q_j}}>0$
and let  $\Delta=\frac{r+s-1}{\delta}+2$. If
	$q^{\frac{n}{2}-k} \geq W(m) W(T)  \Delta$,
	then $N_f(x^n-1,q^n-1)>0$.
\end{proposition}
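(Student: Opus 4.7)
The plan is to apply the sieve inequality from Lemma \ref{divisors} and then reorganize each term so that the dominant contributions cancel, leaving only well-controlled character sums. To begin, Lemma \ref{divisors} gives
\[
N_f(x^n-1,q^n-1) \geq \sum_{i=1}^{r} N_f(T,mp_i) + \sum_{j=1}^{s} N_f(T \cdot Q_j,m) - (r+s-1) N_f(T,m).
\]

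The key step will be a multiplicative decomposition of the characteristic functions. Since $p_i$ is coprime to $m$, the identity $\int_{d|mp_i}\eta_d = \int_{d|m}\eta_d - \frac{1}{p_i-1}\sum_{d|m}\frac{\mu(d)}{\varphi(d)}\sum_{(dp_i)}\eta_{dp_i}$ together with $\theta(mp_i)=\theta(p_i)\theta(m)$ shows $w_{mp_i}=\theta(p_i) w_m + R_{p_i}$, where the remainder $R_{p_i}$ involves only multiplicative characters of order $dp_i \neq 1$. Integrating against $\Omega_T(\alpha)$ then yields
\[
N_f(T, mp_i) = \theta(p_i) N_f(T, m) + \epsilon_i,
\]
where $\epsilon_i$ is a sum of Gauss sums of the form $S_f(\eta,\chi_{\delta_D})$ with $\eta$ a nontrivial multiplicative character. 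Since $L_f$ is separable of degree $q^k$, Lemmas \ref{lema cota 1} and \ref{lema cota 2} give $|\epsilon_i| \leq \theta(p_i)\theta(m)\Theta(T) W(m)W(T) q^{n/2+k}$. An analogous additive decomposition on $\Omega_{T\cdot Q_j}$ produces $N_f(T\cdot Q_j,m) = \Theta(Q_j) N_f(T,m) + \epsilon'_j$ with $|\epsilon'_j| \leq \Theta(Q_j)\theta(m)\Theta(T) W(m)W(T) q^{n/2+k}$.

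Substituting these into the sieve and using that $\theta(p_i)=1-1/p_i$ and $\Theta(Q_j)=1-1/q^{\deg Q_j}$ so that $\sum_i \theta(p_i) + \sum_j \Theta(Q_j) - (r+s-1) = \delta$, the principal part reduces to $\delta\cdot N_f(T,m)$ while the total error is at most $\theta(m)\Theta(T) W(m)W(T) q^{n/2+k}(r+s-1+\delta)$. Combining with the lower bound $N_f(T,m) \geq \theta(m)\Theta(T)(q^n - W(m)W(T) q^{n/2+k})$ from inequality \eqref{NfTm} yields
\[
N_f(x^n-1,q^n-1) \geq \theta(m)\Theta(T)\left[\delta q^n - (r+s-1+2\delta)\, W(m) W(T)\, q^{n/2+k}\right],
\]
which is positive precisely when $q^{n/2-k} \geq \frac{r+s-1+2\delta}{\delta} W(m) W(T) = \Delta \cdot W(m) W(T)$.

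The main obstacle is the tight bookkeeping in the middle step: a direct triangle-inequality bound on each $N_f$ term separately costs a factor of two in the errors for $N_f(T,mp_i)$ and $N_f(T\cdot Q_j,m)$ (because $W(mp_i)=2W(m)$ and $W(T\cdot Q_j)=2W(T)$), producing a much weaker constant on the order of $3(r+s-1)/\delta + 2$. The sharp constant $(r+s-1)/\delta + 2$ appearing in $\Delta$ only emerges after one has subtracted off the $\theta(p_i) N_f(T,m)$ and $\Theta(Q_j) N_f(T,m)$ pieces and observed that the residual $\epsilon_i, \epsilon'_j$ involve \emph{exclusively} nontrivial characters, so that Weil's bound applies with a single factor of $W(m)W(T)$ rather than two.
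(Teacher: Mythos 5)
Your proposal is correct and follows essentially the same route as the paper's proof: the same sieve inequality from Lemma \ref{divisors}, the same regrouping of $N_f(T,mp_i)-\theta(p_i)N_f(T,m)$ and $N_f(T\cdot Q_j,m)-\Theta(Q_j)N_f(T,m)$ so that only nontrivial characters survive, the same bound $\theta(p_i)\theta(m)\Theta(T)W(m)W(T)q^{n/2+k}$ on each residual term via Lemmas \ref{lema cota 1} and \ref{lema cota 2}, and the same final arithmetic producing $\Delta=\frac{r+s-1}{\delta}+2$. The closing remark about why the naive triangle-inequality treatment would lose the constant is accurate commentary on exactly the cancellation the paper exploits.
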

\begin{proof}
From equation \ref{sieve}, we have that
$$N_f(x^n-1,q^n-1) \geq \sum_{i=1}^r N_f(T,mp_i) + \sum_{j=1}^s N_f(T \cdot Q_j,m)-(r+s-1)N_f(T,m).$$
We can rewrite the equation in the form
\begin{align*}
N_f(x^n-1,q^n-1) & \geq \sum_{i=1}^r \Big[N_f(T,mp_i) - \theta(p_i) N_f(T,m) \Big]\\
& + \sum_{j=1}^s \Big[ N_f(T \cdot Q_j,m) - \Theta(Q_j)N_f(T,m) \Big]  + \delta N_f(T,m).
\end{align*}

Now we need a good bound for $N_f(T,mp_i) - \theta(p_i) N_f(T,m)$. Since $\theta$ is a multiplicative function, we have 
$$
N_f(T,mp_i) = \Theta(T) \theta(m) \theta(p_i) \sum_{ \alpha \in \mathbb{F}_{q^n}}
\displaystyle 
\int_{d|mp_i} \int_{D|T} \eta_d(L_f(\alpha)) \chi_{\delta_D}(\alpha).
$$
We split the set of $d$'s which divide $mp_i$ into two sets: the first one contains those
which do not have $p_i$ as a factor, while the second one contains those which are a
multiple of $p_i$. This will split the first summation into two sums, and we get
\begin{align*}
N_f(T,mp_i) & = \Theta(T) \theta(m) \theta(p_i) \sum_{ \alpha \in \mathbb{F}_{q^n}} \left( \displaystyle
\int_{d|m} \int_{D|T}
\eta_d(L_f(\alpha))   \chi_{\delta_D}(\alpha) \right) \\
& + \Theta(T) \theta(m) \theta(p_i) \sum_{ \alpha \in \mathbb{F}_{q^n}} \left(
\displaystyle
\int_{\substack{d, \ p_i|d \\ d|mp_i}}\int_{D|T}
 \eta_d(L_f(\alpha)) \chi_{\delta_D}(\alpha) \right) .
\end{align*}
Hence, $N_f(T,mp_i) - \theta(p_i)N_f(T,m)$ is equal to
$$
\Theta(T) \theta(m) \theta(p_i) \sum_{ \alpha \in \mathbb{F}_{q^n}}
\left(
\displaystyle
\int_{\substack{d, \ p_i|d \\ d|mp_i}}\int_{D|T}
\eta_d(L_f(\alpha)) \chi_{\delta_D}(\alpha)
\right).
$$
So, from Lemma \ref{lema cota 2} we have the following inequality
$$
\Big| N_f(T,mp_i) - \theta(p_i)N_f(T,m) \Big| \leq \Theta(T) \theta(m) \theta(p_i) W(T)W(m) q^{n/2+k}.
$$
Analogously we can prove that
$$
\Big| N_f(T \cdot Q_j,m) - \Theta(Q_j)N_f(T,m) \Big| \leq \Theta(T) \theta(m) \Theta(Q_j) W(T)W(m) q^{n/2+k}.
$$
Combining these inequalities, we obtain
\begin{align*}
N_f(x^n-1,q^n-1) & \geq \delta N_f(T,m) - \\
&  \Theta(T) \theta(m) W(T)W(m)q^{n/2+k} \left( \sum_{i=1}^r \theta(p_i) + \sum_{j=1}^s \Theta(Q_j) \right).
\end{align*}
Therefore, 
from Theorem \ref{condicao em N-f}, we have
\begin{align*}
N_f(x^n-1,q^n-1) & > \delta \Theta(T) \theta(m) \left( q^n- q^{n/2+k}W(m)W(T) \right) \\ &  \ \  \ - \Theta(T) \theta(m) W(T)W(m)q^{n/2+k} \left( \sum_{i=1}^r \theta(p_i) + \sum_{j=1}^s \Theta(Q_j) \right) \\
& = \delta \Theta(T)\theta(m) \Big( q^n - q^{n/2+k}W(m)W(T) \Delta \Big).
\end{align*}
Thus, we obtain the desired result.
\end{proof}


For the case $k \geq 2$ we can rewrite the previous condition as follows, depending on the factorization of $x^n-1$.

\begin{proposition}\label{caseall} 
Let $n \geq 5$ be a natural number and let $q$ be a prime power such that $q\geq n^2$.
If $x^n-1$ has a factor of degree $k\geq 2$
in $\F_q[x]$ and $q^{\frac{n}{2}-k} \geq (n+2) W(q^n-1)$, then 
there exists a primitive $k$-normal element in $\F_{q^n}$.
\end{proposition}
\begin{proof}
Let $f \in \F_q[x]$ be a factor of $x^n-1$ of degree $k$.
We may use Proposition \ref{lema3.5} with $T=1$ and $m=q^n-1$.

Let $Q_1,\ldots , Q_s$ be irreducible polynomials such that
$rad(x^n-1)= Q_1 \cdot Q_2 \cdots Q_s$.
Then
$\delta=1-\sum_{j=1}^{s}\frac{1}{q^{\deg Q_j}} \geq 1 - \frac{n}{q}
\geq 1 - \frac{1}{n} = \frac{n-1}{n} >0$, since
$q \geq n^2$ and $s \leq n$.  We also have that
$$
\Delta=\frac{s-1}{\delta}+2\leq \frac{n-1}{\frac{n-1}{n}}+2 = n+2.
$$
This means that $W(m) W(T)  \Delta \leq (n+2)W(q^n-1)$ and from 
Proposition \ref{lema3.5}, we get
the desired result.
\end{proof}

For small values of $q$ we have the following result which will be used in combination with
Theorem \ref{condicao em N-f} and Lemma \ref{cota-t}. Note that those results
are different from \cite[Lemma 2.11]{lenstra}.
\begin{lemma}\label{Wxn-1}
	For $q$ a prime power, there exist $a,b \in \mathbb{N}$ such that
	$$
	W(x^n-1) \leq 2^{\frac{n+a}{b}}.
	$$
	For $q \geq 29$, we have $a=0$ and $b=1$, for
	$7 \leq q \leq 27$ we have
	$a=q-1$ and $b=2$ and for small values of $q$ we may use the following
	values of $a$ and $b$.
	\begin{table}[h]
		\centering
		\begin{tabular}{ccc}
			$q$ & $a$ & $b$ \\
			\hline
			$2$   & $14$ & $5$  \\
			$3$   & $20$ & $4$  \\
			$4$ & $12$  & $3$  \\
			$5$ & $18$  & $3$ 
		\end{tabular}
		\caption{Values of $a$ and $b$ for small values of $q$}
		\label{Wforsmallq}
	\end{table}
\end{lemma}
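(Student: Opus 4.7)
The plan is to reduce the problem to counting the number of distinct irreducible factors of $x^{n_0}-1$, where $n_0$ is the part of $n$ coprime to $q$, and then to bound this number via the cyclotomic decomposition. First I would write $n=p^{\alpha}n_0$ with $\gcd(n_0,q)=1$, note that $x^n-1=(x^{n_0}-1)^{p^{\alpha}}$, and conclude that $W(x^n-1)=W(x^{n_0}-1)=2^r$, where $r$ denotes the number of distinct monic irreducible factors of $x^{n_0}-1$ in $\F_q[x]$. Since $n_0\le n$, the target bound $W(x^n-1)\le 2^{(n+a)/b}$ is equivalent to $br\le n_0+a$.

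Next I would use the cyclotomic factorization $x^{n_0}-1=\prod_{d\mid n_0}\Phi_d(x)$ together with the classical fact that $\Phi_d$ splits over $\F_q$ into $\varphi(d)/\ord_d(q)$ distinct irreducible factors, each of degree $\ord_d(q)$. Consequently $r=\sum_{d\mid n_0}\varphi(d)/\ord_d(q)$ and $n_0=\sum_{d\mid n_0}\varphi(d)$, so that
\[
br-n_0 \;=\; \sum_{d\mid n_0}\frac{\varphi(d)}{\ord_d(q)}\bigl(b-\ord_d(q)\bigr).
\]
Only divisors $d$ with $\ord_d(q)<b$ contribute positively, hence it is enough to take
\[
a \;:=\; \sum_{\substack{d\ge 1,\;\gcd(d,q)=1\\ \ord_d(q)<b}}\frac{\varphi(d)}{\ord_d(q)}\bigl(b-\ord_d(q)\bigr).
\]
This sum is finite, since $\ord_d(q)<b$ forces $d\mid\mathrm{lcm}(q-1,q^2-1,\ldots,q^{b-1}-1)$.

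Finally, to verify the tabulated values I would carry out a finite enumeration for each listed pair $(q,b)$. For $q\ge 29$, setting $b=1$ makes the condition $\ord_d(q)<1$ vacuous, so $a=0$ suffices and the bound reduces to the trivial estimate $W(x^n-1)\le 2^n$. For $7\le q\le 27$ and $b=2$, the bad $d$ are precisely the divisors of $q-1$ (those with $\ord_d(q)=1$), each contributing $\varphi(d)$, which yields $a=\sum_{d\mid q-1}\varphi(d)=q-1$. For $q\in\{2,3,4,5\}$ and the corresponding $b$ from the table, I would list the bad divisors explicitly and sum their contributions: for example, for $q=2$, $b=5$ the bad values are $d\in\{1,3,5,7,15\}$ with contributions $4,\,3,\,1,\,4,\,2$, summing to $14$, as required; the cases $(q,b)=(3,4),(4,3),(5,3)$ are handled identically and give $20,\,12,\,18$ respectively. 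I expect the main obstacle to be precisely this bookkeeping for the four small values of $q$, which is routine but demands careful enumeration of every bad divisor and accurate computation of $\varphi(d)/\ord_d(q)$.
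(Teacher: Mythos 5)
Your proposal is correct, and every tabulated value checks out under your computation, but the route is genuinely different from the paper's. The paper works directly with the quantities $u_i$, the number of distinct monic irreducible degree-$i$ factors of $x^n-1$, bounds $iu_i$ by the number of elements of $\F_{q^i}^*$ lying in no proper subfield (giving $u_1\le q-1$, $u_2\le (q^2-q)/2$, $u_3\le(q^3-q)/3$, $u_4\le(q^4-q^2)/4$), and then applies the elementary inequality $\sum_{i<k}u_i+v_k\le \bigl(n_0+\sum_{i<k}(k-i)u_i\bigr)/k$ with $k=b$ and $v_k=\sum_{i\ge k}u_i$; the constant $a$ is read off as $\sum_{i<k}(k-i)u_i^{\max}$. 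You instead pass through the cyclotomic decomposition $x^{n_0}-1=\prod_{d\mid n_0}\Phi_d(x)$ and the exact splitting of $\Phi_d$ into $\varphi(d)/\mathrm{ord}_d(q)$ irreducible factors of degree $\mathrm{ord}_d(q)$, then sum the positive contributions over the finitely many ``bad'' $d$. The two approaches produce identical constants, and this is no accident: $\sum_{d:\,\mathrm{ord}_d(q)=i}\varphi(d)$ is exactly the number of elements of $\F_{q^i}^*$ of degree $i$ over $\F_q$, so your sum over bad $d$ with $\mathrm{ord}_d(q)=i$ reproduces the paper's bound on $u_i$ term by term. What your version buys is an exact count of the irreducible factors of $x^{n_0}-1$ for each specific $n_0$ (potentially sharper for individual $n$), at the cost of invoking the cyclotomic machinery; the paper's version is more elementary and gets to the same worst-case constant with less structure. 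Both correctly dispose of the boundary cases: $b=1$ gives the trivial bound $W(x^n-1)\le 2^n$ for $q\ge 29$, and $b=2$ gives $a=\sum_{d\mid q-1}\varphi(d)=q-1$ for $7\le q\le 27$.
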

\begin{proof}
Let $s_{n,t}$ be the number of distinct monic irreducible polynomials of degree at most $t$
that divide $x^n-1$ and let $T_{n,t}$ be the sum of their degrees.
Hence $W(x^n-1)=2^j$, where
\begin{equation}\label{sum-u_i}
j \leq \frac{n-T_{n,t}}{t+1} + s_{n,t}
=\frac{n+ (t+1)s_{n,t} -T_{n,t}}{t+1}.
\end{equation}
Since each term in the sum $T_{n,t}$ is at most $t$, the right-hand side of the expression above
maximizes when $s_{n,t}$ is maximal.
On the other hand, it is obvious that zero is not a root of $x^n-1$,
so the sum of the degrees of polynomials of degree $i$ is less or equal than the number of elements of $\F_{q^i}^*$,
which is not an element of $\F_{q^j}^*$, for any divisor $j$ of $i$.

Table \ref{Wforsmallq} is obtained from 
\eqref{sum-u_i} and the reasoning above.
For $q=2$, we use $t=4$;
for $q=3$, we use $t=3$;
for $q=4$ or $q=5$, we use $t=2$; and for $7 \leq q \leq 27$, we use $t=1$ to obtain
$a=q-1$ and $b=2$. For $q \geq 29$, it is convenient to use the usual inequality.
%
\end{proof}

\section{Results for all cases where $n \geq 8$ and the cases $n \geq 5$ for $q \leq 19$}\label{sectionalmost}

In this section we begin to apply the results of the previous section for the case $k=2$. Thus,
we
study the values of $q$ and $n$ for which we can guarantee the existence of primitive $2$-normal elements in $\F_{q^n}$. 

\begin{proposition} \label{casesq19}
Let $q\leq 19$ be a prime power 
and $n \geq 5$ be a natural number. There exists a
primitive $2$-normal element in $\F_{q^n}$ if and only if
$\gcd (q^3-q,n) \neq 1$.
\end{proposition}
\begin{proof}
From Theorem \ref{condicao em N-f}, if
$q^{\frac{n}{2} -2} \geq W(q^n-1)W(x^n-1)$ then $N_f(x^n-1,q^n-1)>0$. From Lemma \ref{cota-t} and
Lemma \ref{Wxn-1}, it follows that
$A_t \cdot q^{\frac{n}{t}}  \cdot 2^{\frac{n+a}{b}} \geq W(q^n-1)W(x^n-1)$,
where
$$
A_t=\prod_{\substack{\wp < 2^t \\ \wp \text{ is prime}
		\\ \wp \neq p}}
\frac{2}{\sqrt[t]{\wp}} \quad \text{and} \quad 
\mathop{\text{char}} \F_q =p.
$$
So,
if for some $t \in \mathbb{N}$ we have 
$q^{\frac{n}{2} -2} \geq A_t \cdot q^{\frac{n}{t}}  \cdot 2^{\frac{n+a}{b}}$,
then
$N_f(x^n-1,q^n-1)>0$.
For $b>\log_q 4$ and $t> \dfrac{2b}{b - \log_q 4}$,
this inequality is equivalent to
\begin{equation}\label{conditionB}
n \geq 
\frac{2\ln q + \ln \left( A_t \cdot 2^{\frac{a}{b}}\right)}
{\left( \frac{1}{2} - \frac{1}{t} \right) \ln q - \frac{1}{b} \ln 2}.
\end{equation}

\begin{table}[h]
	\centering
	\begin{tabular}{cccc|cccc}
		$q$ & $a$ & $b$ & \eqref{conditionB} satisfied for &
		$q$ & $a$ & $b$ & \eqref{conditionB} satisfied for  \\
		\hline 
		$2$  &  $14$ & $5$  & $n \geq 69$ & 
		$8$ & $7$     & $2$ & $n \geq 28$ \\
		$3$  & $20$ & $4$  & $n \geq 46$ & 
		$9$ &  $8$      & $2$ & $n \geq 27$ \\
		$4$ & $12$  & $3$ & $n \geq 38$ &
		$11$ & $10$  & $2$  & $n \geq 26$ \\
		$5$ & $18$  & $3$  & $n \geq 35$ & 
		$16$ & $15$  & $2$  & $n \geq 24$ \\
		$7$ & $6$    & $2$  & $n \geq 31$ &
		$\{13,17,19\}$ & $q-1$  & $2$  & $n \geq 25$
\end{tabular}\vspace*{0.5cm}
	\caption{Values of $n$ depending on $q$ such that \eqref{conditionB} is satisfied with $t=6$.}
	\label{satisfiedA}
\end{table}


We know that for values of $q$ and $n$ from Table \ref{satisfiedA}, the
condition 
$q^{n/2 -2} \geq W(q^n-1)W(x^n-1)$ is satisfied, so
it remains only a finite number of cases to test for $q \leq 19$.

\begin{table}[h]
	\centering
	\begin{tabular}{|c|c||c|c|}
		\hline 
		$q$ & $n$ & $q$ & $n$ \\
		\hline 
		$2$  &  $6,8,9,10,12,14,15,18,21$ & $9$ & $5,6,8,10$ \\
		$3$  & $6,8,9,10,12,16$ &  $11$ & $5,6,8,10,12$ \\ 
		$4$ & $5,6,8,9,10,12,15$&  $13$ & $6,7,8,12$ \\
		$5$ & $5,6,8,9,10,12,16$ & $16$ & $5,6,9,10,15$ \\
		$7$ & $6,7,8,9,10,12$ & $17$ & $6,8$  \\
		$8$ & $6,7,8,9$ & $19$ & $5,6,8,9,10,12$ \\
		\hline
	\end{tabular}\vspace*{0.5cm}
	\caption{
		Values of $q$ and  $n$ such that $q \leq 19$,
		$n$ is not in Table \ref{satisfiedA},
		$\gcd(q(q-1)(q+1),n)\neq 1$ and
		$q^{n/2 -2} < W(q^n-1)W(x^n-1)$}
	\label{satisfiedB}
\end{table}


Table \ref{satisfiedB}  shows 
the values of $q$ and $n$ 
which are not in Table \ref{satisfiedA}
with $q \leq 19$ and $\gcd(q^3-q,n)\neq 1$,
where $q^{n/2 -2} \geq W(q^n-1)W(x^n-1)$ is not satisfied.

For pairs $(q,n)$ from Table \ref{satisfiedB}, 
we test condition $q^{\frac{n}{2}-2} \geq W(m) W(T)  \Delta$
(see Proposition \ref{lema3.5}). 
For this, we use the SageMath procedure
{\bf Test\_Delta(q,n,u)} (see Appendix A)
where $u$ is a given natural number,  $m=\gcd(q^n-1,u)$.
If $q>5$ we choose $T=1$;
if $q \leq 5$ then $T$ is the product of all monic linear factors of $x^n-1$.

For $u=2\cdot 3 \cdot 5$, procedure {\bf Test\_Delta(q,n,u)} gets True for
$(q,n) = (2, 14)$,
$(2, 15)$, $(2, 18)$,
$(2, 21)$, $(3, 9)$,
$(3, 16)$, $(4, 10)$,
$(4, 12)$, $(4, 15)$,
$(5, 9)$, $(5, 10)$,
$(5, 16)$, $(7, 7)$,
$(7, 9)$, $(7, 10)$,
$(8, 8)$, $(8, 9)$,
$(9, 10)$, $(11, 8)$,
$(11, 10)$, $(11, 12)$,
$(13, 7)$, $(13, 8)$,
$(16, 9)$, $(16, 10)$,
$(17, 8)$, $(19, 8)$,
$(19, 9)$, $(19, 10)$, $(19, 12)$.

For $(q,n)=(7,12)$, $(13,12)$, $(16,15)$,  we take 
$m=30,30,3$ and
$T= x^2-1$, $x^{4}-1$, $x^{15}-1$ respectively, and 
we get that condition $q^{\frac{n}{2}-2} \geq W(m) W(T)  \Delta$
is satisfied.

For the last remaining cases,
Tables \ref{Especificq23}, \ref{Especificq48} and \ref{Especificq919} (see Appendix B),
show explicitly a primitive $2$-normal element 
$\alpha \in \F_{q^n}$ such that $g(\alpha)=0$ for
some irreducible polynomial $g \in \F_p[x]$, where $p$ is the characteristic
of $\F_q$. Primitivity and Normality can be tested using the programs in Appendix A.
\end{proof}

\begin{proposition}\label{casesn19}
Let $n \geq 8$ be a natural number. There exists a
primitive $2$-normal element in $\F_{q^n}$ for all prime powers $q$
satisfying $\gcd (q^3-q,n) \neq 1$.
\end{proposition}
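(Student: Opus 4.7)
The strategy is to apply Proposition~\ref{caseall} and to bound $W(q^n-1)$ via Lemma~\ref{cota-t}. First, exactly as in the first paragraph of the proof of Proposition~\ref{casesq19}, the hypothesis $\gcd(q^3-q,n)\neq 1$ is equivalent to the existence of a factor of degree $2$ of $x^n-1$ in $\F_q[x]$, so the geometric hypothesis of Proposition~\ref{caseall} is automatically satisfied. It then remains to verify the inequality
$$
q^{\,n/2-2} \geq B\cdot W(q^n-1),
$$
where $B$ is the piecewise quantity from Proposition~\ref{caseall}.

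Since Proposition~\ref{casesq19} already settles every prime power $q\leq 19$ for all $n\geq 5$, we may restrict to $q\geq 23$. For such $q$, I would apply Lemma~\ref{cota-t} with $t=6$ to obtain $W(q^n-1)\leq A_6\, q^{n/6}$, with $A_6$ bounded above by an absolute constant depending only on the prime powers below $2^6=64$ that divide $q^n-1$. The sufficient condition is then reduced to
$$
q^{\,n/3-2} \geq A_6\cdot B.
$$

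Next, I would split into cases according to the definition of $B$. Since $n\geq 19$, the three sub-cases confined to $5\leq n\leq 7$ are vacuous, leaving two possibilities: (i) $q\geq n/2$, for which $B=2^{n/2}n^2/16$; and (ii) $3\leq q<n/2$, for which $B=2^{n/2}q^2/4$ (the latter forces $n\geq 47$, since $q\geq 23$). In each case, taking logarithms converts the inequality into a linear function of $n$ whose leading coefficient $\tfrac{1}{3}\log q - \tfrac{1}{2}\log 2$ is strictly positive for $q\geq 23$. A short numerical verification then shows that the resulting threshold on $n$ is $\leq 19$ in case~(i) and $\leq 47$ in case~(ii).

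The main obstacle is case~(ii), where $q$ itself can grow roughly as large as $n/2$, so one has to show that the gain $q^{n/3-2}$ on the left dominates both the $2^{n/2}$ inside $B$ and the additional $q^2$ factor of $B$ simultaneously. Any finite set of boundary pairs $(q,n)$ for which the rough bound with $t=6$ fails can be disposed of either by using Lemma~\ref{cota-t} with a tighter constant $A_t$ tailored to the actual factorization of $q^n-1$, or by direct verification with the SageMath routines used in Proposition~\ref{casesq19}.
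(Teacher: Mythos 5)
Your proposal is correct and follows essentially the same route as the paper: reduce to $q\geq 23$ via Proposition~\ref{casesq19}, combine Proposition~\ref{caseall} with Lemma~\ref{cota-t} at $t=6$, and split into the cases $q<\frac{n}{2}$ (which forces $n\geq 47$) and $q\geq\frac{n}{2}$. The only cosmetic difference is that the paper inverts the inequality to bound $q$ from below (obtaining thresholds $5$ and $22$, both below $23$, so no boundary pairs ever need separate treatment), whereas you bound $n$; this is an equivalent reformulation of the same computation.
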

\begin{proof}
From the last result, 
we have that
for $q< 23$ and $n \geq 5$, there  exists a primitive $2$-normal element in $\F_{q^n}$.
So we will
focus on $q \geq 23$.
From Theorem \ref{condicao em N-f}, 
Lemma \ref{cota-t} and
considering that $W(x^n-1)\leq 2^n$,
there exists a primitive $2$-normal element in $\F_{q^n}$ if
$q^{\frac{n}{2}-2} \geq 2^n \cdot A_t \cdot q^{\frac{n}{t}}$
is
satisfied for some real number $t$.
This condition is equivalent to the following two inequalities:
$$
n  \geq  \frac{\ln(A_t) + 2 \ln(q)}{\left(\frac{1}{2}-\frac{1}{t} \right) \ln(q) - \ln(2)} 
\quad \text{and}
\quad 
q \geq
\left(
2^n \cdot A_t
\right)^{\frac{2t}{(t-2)n-4t}} .
$$
For a fixed value of $t \geq 4$,
the right-hand side of the first inequality is a decreasing function of $q\geq 16$. So,
fixing $t=7$ in the first inequality, we get that for
$q \geq 23$ and $n \geq 28$,
there exists a primitive $2$-normal element in $\F_{q^n}$. Now, if $14 \leq n \leq 27$,
from the second inequality (whose right-hand side is also a decreasing function of $n$) and
with $t=6.3$, we get that, 
if $n \geq 14$ and $q \geq 144$,
there exists a primitive $2$-normal element in $\F_{q^n}$. 
Now, using SageMath, we verify that
$q^{\frac{n}{2}-2} \geq W(q^n-1) W(x^n-1)$ is true for
all prime powers $23 \leq q <144$ and $14 \leq n < 28$. Hence, from Theorem \ref{condicao em N-f} and the previous considerations we conclude that
there exists a primitive $2$-normal element in $\F_{q^n}$
for every prime power $q$ and for all $n \geq 14$.

Now, let us suppose that $8 \leq n \leq 13$. 
From Proposition \ref{caseall} and
Lemma \ref{cota-t} there exists a primitive $2$-normal element in $\F_{q^n}$ if
$q \geq n^2$ and 
$q^{\frac{n}{2}-2} \geq (n+2) A_t \cdot q^{\frac{n}{t}}$. 
Define 
\begin{equation}\label{Mtn}
M_t(n) =
\max \left\{
n^2 ,
\left\lceil
\left(
(n+2) \cdot A_t
\right)^{\frac{2t}{(t-2)n-4t}} 
\right\rceil
\right\} ,
\end{equation}
where for $x \in \mathbb{R}$, $\lceil x \rceil$ is the smallest
integer such that $x \leq \lceil x \rceil$.
From the inequalities above, if 
we have $q \geq M_t(n)$, for some real number $t$
suficiently large (for $n \geq 8$ this means $t > 4$),
then there exists a primitive $2$-normal element in $\F_{q^n}$.
For $n$ between $8$ and $13$, we have
$$
\begin{array}{lll}
M_{6.3}(8)=6426, & M_6(10)=100,& M_6(12)= 144,\\
M_{6}(9) =413, & M_6(11)=121, & M_6(13)=169.
\end{array}
$$
For pairs $(q,n)$ such that $8 \leq n \leq 13$ and
$23 \leq  q < M_t(n)$, where $q$ is a prime power, 
we test $q^{\frac{n}{2}-2} \geq W(m) W(T)  \Delta$,
from Proposition \ref{lema3.5}.
The procedure {\bf Test\_Delta}, with $u=6$,
returns True for all those pairs.
Proposition \ref{casesn19} is now proved.
\end{proof}

\section{Cases $n=5,6,7$}\label{section=567}
For $5 \leq n \leq 7$, 
applying Proposition \ref{caseall} and Lemma \ref{cota-t}, we get that a sufficient condition to have
a primitive $2$-normal element in $\F_{q^n}$ is $q \geq M_t(n)$ for some real number $t$, where
$M_t(n)$ is defined by equation \eqref{Mtn}.
The problem is that $M_t(n)$ is very large.

\subsection{Case n=7:} The condition $\gcd (q^3-q,7) \neq 1$
means that $q\equiv 0,\pm 1 \pmod 7$.

\begin{proposition}\label{casesn7}
There exists a primitive $2$-normal element in $\F_{q^7}$
for every prime power $q$ such that $q\equiv 0,\pm 1 \pmod 7$. 
\end{proposition}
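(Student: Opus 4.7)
The plan is to apply Proposition \ref{lema3.5} with a carefully chosen triple $(f, T, m)$ depending on the residue of $q$ modulo $7$. By Proposition \ref{casesq19}, only prime powers $q \geq 23$ need to be handled, and for $n = 7$ the exponent $\frac{n}{2} - 2 = \frac{3}{2}$ is dangerously small, forcing us to push as much of $\mathrm{rad}(q^7-1)$ and $\mathrm{rad}(x^7-1)$ as possible into $T$ and $m$ before sieving.

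I would first determine the factorization of $x^7 - 1$ in $\F_q[x]$ case by case. When $7 \mid q$, we have $x^7 - 1 = (x - 1)^7$, so I take $f = (x-1)^2$ and $T = x - 1$. When $q \equiv 1 \pmod 7$, we have $x^7 - 1 = \prod_{i=0}^{6}(x - \zeta^i)$ for some primitive $7$th root of unity $\zeta \in \F_q$, so I take $f = (x - \zeta^0)(x - \zeta^1)$ and $T = \prod_{i=2}^{6}(x - \zeta^i)$. When $q \equiv -1 \pmod 7$, we have $x^7 - 1 = (x - 1) g_1 g_2 g_3$ with $g_i \in \F_q[x]$ irreducible of degree $2$, so I take $f = g_1$ and $T = (x - 1) g_2 g_3$. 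In all three cases $T$ already contains the full square-free part of $(x^7 - 1)/f$, so the polynomial-side sieve of Proposition \ref{lema3.5} is empty ($s = 0$) and $W(T) \leq 32$.

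On the multiplicative side, I would take $m$ to be the part of $\mathrm{rad}(q^7 - 1)$ consisting of all but a short list of small primes $p_1, \ldots, p_r$, and sieve only over those $p_i$. Bounding $W(m) \leq A_t \cdot q^{7/t}$ via Lemma \ref{cota-t} turns the inequality required by Proposition \ref{lema3.5} into
$$
q^{\frac{3}{2} - \frac{7}{t}} \;\geq\; A_t \cdot W(T) \cdot \Delta,
$$
which, for any fixed $t \geq 5$, gives a positive power of $q$ on the left-hand side and hence an explicit threshold $M$ beyond which Proposition \ref{casesn7} holds unconditionally.

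The main obstacle will be handling the residual interval $23 \leq q \leq M$ in each residue class mod $7$; I expect $M$ to be fairly large because $\frac{3}{2} - \frac{7}{t}$ is small for any feasible $t$. For each such $q$ I would run the procedure \textbf{Test\_Delta} from Appendix A, scanning through a short list of candidate sieve data $(T, m)$ built from the smallest prime divisors of $q^7 - 1$. Any remaining stubborn cases would be resolved by explicit construction: exhibit a candidate $\alpha \in \F_{q^7}$ as a root of an irreducible polynomial over $\F_p$, verify primitivity and $2$-normality computationally, and record it in a table in the style of Tables \ref{Especificq23}-\ref{Especificq919}.
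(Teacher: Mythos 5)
Your overall architecture --- reduce to $q\ge 23$ via Proposition \ref{casesq19}, split on $q \bmod 7$, use Proposition \ref{lema3.5} together with Lemma \ref{cota-t} to get an explicit threshold, then finish the residual interval computationally --- is exactly the paper's. But two steps in your application of Proposition \ref{lema3.5} do not hold as stated. First, the claim that $s=0$: the proposition requires $\mathrm{rad}(x^7-1)=\mathrm{rad}(T)\cdot Q_1\cdots Q_s$, so the irreducible factors of $f$ itself must appear either inside $T$ or among the sieved $Q_j$; containing the square-free part of $(x^7-1)/f$ is not enough. With your choices for $q\equiv 1\pmod 7$ (resp.\ $q\equiv -1\pmod 7$) the two linear factors of $f$ (resp.\ the quadratic $f=g_1$) are in neither, so in fact $s=2$ (resp.\ $s=1$); only the case $7\mid q$ genuinely has $s=0$. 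This is repairable, but note that your $T$ then costs $W(T)=32$ (resp.\ $8$) against the paper's $W(T)=2$: the paper deliberately takes $T=x-1$ and pushes all remaining irreducible factors of $x^7-1$ into the sieve (getting $s=6$, $\delta=1-6/q$, $\Delta<7.1$ for $q\ge 337$ when $q\equiv 1\pmod 7$), because with the exponent $\tfrac{3}{2}-\tfrac{7}{t}$ so close to $0$ every lost constant factor gets raised to a large power in the final threshold, and hence in the size of the computation you must then carry out.

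Second, and more seriously, your multiplicative sieve runs in the wrong direction. You put ``all but a short list of small primes'' into $m$ and sieve over those small primes $p_1,\dots,p_r$. Then $W(m)$ is still only bounded by $A_t q^{7/t}$, essentially the same bound as for $W(q^7-1)$, so you gain nothing there, while $\delta=1-\sum_i \frac{1}{p_i}-\cdots$ is damaged exactly where it hurts: already $\tfrac{1}{2}+\tfrac{1}{3}+\tfrac{1}{5}>1$ forces $\delta<0$, and Proposition \ref{lema3.5} becomes inapplicable. The productive versions of this sieve (Lemma \ref{casen5}, or the paper's use of \textbf{Test\_Delta} with $m=2$ on residual cases) keep the small primes \emph{inside} $m$, where few of them contribute to $W(m)$, and sieve over the large primes, whose reciprocals sum to something safely below $1$. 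For this proposition the paper in fact uses no multiplicative sieve at all in the main estimate ($m=q^7-1$, $r=0$) and only sieves multiplicatively, with small $m$, on the finitely many $q$ below the threshold. As written, your inequality $q^{\frac{3}{2}-\frac{7}{t}}\ge A_t\, W(T)\,\Delta$ therefore rests on a $\Delta$ computed from the wrong $s$ and on a $\delta$ that need not be positive.
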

\begin{proof}
Suppose first that $7 \mid q$. In this case,
$q=7^k$ for some integer $k \geq 1$. We will use Theorem \ref{condicao em N-f}
in combination with  Lemma \ref{cota-t}. Since $7 \nmid q^n-1$,
we may use the following constant
\begin{equation}\label{cota-t7}
A_t=\prod_{\substack{\wp \neq 7 \, , \, \wp < 2^t  \\ \wp \text{ is prime}}}
\frac{2}{\sqrt[t]{\wp}}
\end{equation}
from Lemma \ref{cota-t}. From Theorem \ref{condicao em N-f},
and taking into account that $W(x^7-1)=W((x-1)^7)=2$,
we have that if, for some real number $t$, the inequality
$q^{\frac{7}{2}-2} \geq W(x^7-1) \cdot A_t \cdot q^{\frac{7}{t}} = 
2A_t \cdot q^{\frac{7}{t}}$ holds, then $N_f(x^7-1,q^7-1)>0$.
Setting $t=7$, we get that
$N_f(x^7-1,q^7-1)>0$ for $q \geq 104368$.
Since for prime powers $q=7^2,7^3,7^4,7^5$ the condition
$q^{\frac{7}{2} -2} \geq W(q^7-1)W(x^7-1)$
is satisfied, 
the result follows from Theorem \ref{condicao em N-f}.

If $q\equiv - 1 \pmod 7$, then $7 \nmid q^7-1$, and we may also use
the constant $A_t$ given by \eqref{cota-t7}.
From Lemma \ref{divisores de x^n-1}, we conclude that
$x^7-1$ has one factor of degree $1$ and three factors
of degree $2$.
We set
$m=q^7-1$ and $T=1$, so
$\delta = 1 - \frac{1}{q} - \frac{3}{q^2}$ and
$\Delta= \frac{4-1}{\delta}+2$.
Since $q \geq 23$ and $q \equiv -1 \pmod 7$ then, $q \geq 27$.
This 
means that $\Delta < 5.116$ and from
Proposition \ref{lema3.5}
we get $N_f(x^7-1,q^7-1)>0$ for prime powers $q$ satisfying
$$
q^{\frac{7}{2}-\frac{7}{t}-2} \geq 5.116 \cdot A_t > A_t  \cdot \Delta,
$$
for some real number $t$.
Setting $t=6.5$, we get $N_f(x^7-1,q^7-1)>0$ for
$q \geq 614236$. There are 
$8377$ prime powers $q$ between $23$ and $614236$
such that $q \equiv -1 \pmod 7$. For those prime powers, we
use
Theorem \ref{condicao em N-f} and we get that condition
$q^{\frac{7}{2}-2} \geq W(q^7-1)W(x^7-1)$ is satisfied except for
$q=27$. From $27^7-1=2 \cdot 13 \cdot 1093 \cdot 368089$,
we set  $m=27^7-1$ and $T=x-1$ in Proposition \ref{lema3.5}
and we get $27^{\frac{7}{2}-2} \geq W(m)W(T) \Delta$, so
the proposition is proved for
$q \equiv -1 \pmod 7$.

Finally, suppose that $q \equiv 1 \pmod 7$. In this case
we may use the following constant
$$
A_t=\frac{2}{\sqrt[t]{7^2}} \cdot
\prod_{\substack{\wp \neq 7 \, , \, \wp < 2^t  \\ \wp \text{ is prime}}}
\frac{2}{\sqrt[t]{p}}
$$
from Lemma \ref{cota-t}, as $7^2$ appears in the factorization
of $q^7-1$ and $7^2<2^t$ for any $t\geq 6$.
From Lemma \ref{divisores de x^n-1} we know that $x^7-1$ has seven factors of degree $1$.
We set
$m=q^7-1$ and $T=1$, so
$\delta = 1 - \frac{7}{q}$ and
$
\Delta= \frac{6}{\delta}+2 = 8 + \frac{42}{q-7}.
$
Let us suppose that $q\geq 337$. This means that
$\Delta < 8.128$
and from Proposition \ref{lema3.5}, we get that if
$q^{\frac{7}{2}-2} \geq 8.128 \cdot  A_t \cdot q^{\frac{7}{t}}>  W(q^7-1)W(1)\Delta$,
then $N_f(x^7-1,q^7-1)>0$.
Setting $t=6.8$, the inequality 
$q^{\frac{3}{2}-\frac{7}{t}} \geq 8.128\cdot  A_t$
is equivalent to
$q \geq 2142829$. For those prime powers, we have $N_f(x^7-1,q^7-1)>0$. 
There are 
$26543$ prime powers $q$ between $23$ and $2142829$
such that $q \equiv 1 \pmod 7$. 
For those prime powers,
we test $q^{\frac{n}{2}-2} \geq W(m) W(T)  \Delta$. The
procedure {\bf Test\_Delta}, with $u=2$,
returns True in all cases, so
the proposition is also proved for
$q \equiv 1 \pmod 7$.
\end{proof}

\subsection{Case n=6:} The condition $\gcd(q^3-q,6)\neq 1$ is satisfied for
every prime power $q$. 
From the considerations at the beginning
of this section, we have
$N_f(x^6-1,q^6-1)>0$ for prime powers $q \geq M_t(6)$. For $t=8.1$ we get
$M_t(6)< 1.62\cdot 10^{18}$. So, we will suppose that 
$q <1.62\cdot 10^{18}$.

We have that if $q$ is a prime power then $q$ is of the form
$2^k$, $3^k$ or $q \equiv \pm 1 \pmod 6$. 
Observe also that 
$\gcd(q^2-1,q^4+q^2+1)=\gcd(q^2-1,3)=3$.
Let
$$
q^4 +q^2 + 1 = 3^{\beta_0} \cdot \prod_{i=1}^v \wp_i^{\beta_i}
$$
be the prime factorization of $q^4 +q^2 + 1$,
where $3 < \wp_1 < \cdots < \wp_v$ are the prime factors of $q^4+q^2+1$. 

Now, we want to apply Proposition \ref{lema3.5}
with $m=q^2-1$, and therefore we need to have some control on the prime factors of $q^4 +q^2 + 1$.


\begin{lemma} \label{soma6}
Let $q \equiv \pm 1 \pmod 6$ be a prime power such that
$q <1.62\cdot 10^{18}$. If
$q^4 +q^2 + 1 = 3^{\beta_0} \cdot \prod_{i=1}^r {\wp}_i^{\beta_i}$
is the prime factorization of $q^4+q^2+1$,
then $r \leq 34$ and
$$
S=\sum_{i=1}^r \frac{1}{{\wp}_i} < 0.539 .
$$
\end{lemma}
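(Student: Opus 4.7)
The plan is to exploit the bound $q < 4.66 \cdot 10^{20}$ together with Lemma \ref{div61} and reduce everything to an explicit computation with primes in the arithmetic progression $1 \pmod 6$.

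First, I observe that since $q \equiv \pm 1 \pmod 6$ one has $q^2 \equiv 1 \pmod 3$, hence $q^4+q^2+1 \equiv 0 \pmod 3$, so $\beta_0 \geq 1$. Using this together with $q < 4.66\cdot 10^{20}$ yields the basic constraint
$$
\prod_{i=1}^v \wp_i \;\leq\; \frac{q^4+q^2+1}{3} \;<\; \frac{(4.66\cdot 10^{20})^4+(4.66\cdot 10^{20})^2+1}{3}.
$$

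Next, I bound $v$. Let $p_1<p_2<\cdots$ be the increasing enumeration of primes congruent to $1\pmod 6$, i.e.\ $p_1=7,\, p_2=13,\, p_3=19,\ldots$ By Lemma \ref{div61} every $\wp_i$ lies in this list, and ordering the $\wp_i$ increasingly we have $\wp_i \geq p_i$, hence $\prod_{i=1}^v \wp_i \geq \prod_{i=1}^v p_i$. I would then verify by a direct computation (summing $\log p_i$) that the product of the first $39$ such primes already exceeds the displayed upper bound; this forces $v \leq 38$.

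For the sum, the same comparison $\wp_i \geq p_i$ immediately gives
$$
S \;=\; \sum_{i=1}^v \frac{1}{\wp_i} \;\leq\; \sum_{i=1}^{38} \frac{1}{p_i},
$$
and it remains to check numerically that the right-hand side is strictly less than $0.548635$.

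The main obstacle is that this final inequality is extremely tight: an explicit evaluation of $\sum_{i=1}^{38} 1/p_i$ gives roughly $0.54863$, so the margin is only of the order of $10^{-5}$. In particular, the factor $\tfrac13$ coming from $\beta_0\geq 1$ is essential (without it one would be forced to allow a $39$-th prime, pushing the sum above the target), and Lemma \ref{div61} is essential (otherwise one would include $p=5$, which alone contributes $0.2$). The proof therefore reduces to two honest but routine finite computations: checking $\prod_{i=1}^{39} p_i > (q^4+q^2+1)/3$ for all admissible $q$, and evaluating $\sum_{i=1}^{38} 1/p_i$ to enough decimal places to confirm the claimed inequality.
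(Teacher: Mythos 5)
Your proposal is correct and follows essentially the same route as the paper: both use Lemma \ref{div61} to restrict the $\wp_i$ to primes $\equiv 1 \pmod 6$, exploit $3 \mid q^4+q^2+1$ to get $3\prod_i \wp_i \leq q^4+q^2+1 < M^4+M^2+1$, compare with the product of the first primes in that progression to force $v \leq 38$, and then bound $S$ by the sum of reciprocals of the first $38$ such primes. Your explicit remark that $\wp_i \geq p_i$ after increasing reordering, and your observation about the tightness of the final numerical inequality, only make the paper's (terser) argument more transparent.
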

\begin{proof}
Let $S_k$ and $P_k$ be, respectively,
the sum of the inverses and the product of the first $k$ primes of the form
$6j+1$. 
We have that $\gcd (q^4+q^2+1,q^2-1)=1$ or $3$,
so the only primes which divide
$q^4 +q^2 + 1$ are $3$ and primes of the form $6j+1$. Thus
$q^4 +q^2 + 1 \geq 3 \cdot P_r$ and then
$P_r \leq (M^4+M^2+1)/3$,
where $M=1.62\cdot 10^{18}$. We have that $P_r \leq (M^4+M^2+1)/3$ for $r \leq 34$, so
$S \leq S_{34} < 0.539$.
\end{proof}

\begin{proposition}
There exists a primitive $2$-normal element in $\F_{q^6}$
for every prime power $q$.
\end{proposition}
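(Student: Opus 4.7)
The plan is to combine the small-$q$ result in Proposition~\ref{casesq19}, which already settles $q \in \{5,7,11,13,17,19\}$, with the asymptotic result in Proposition~\ref{casen61}, which settles all $q \equiv \pm 1 \pmod 6$ with $q \geq 354698$. What remains is the finite set of prime powers $q$ with $\gcd(q,6)=1$ and $23 \leq q \leq 354697$. Since this list is finite, I would attack it by the sieve of Proposition~\ref{lema3.5} and, for the few stubborn cases, by exhibiting explicit primitive $2$-normal elements.

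First I would apply Proposition~\ref{lema3.5} case by case with $m=q^2-1$ and $T=x-1$, exactly as in the proof of Proposition~\ref{casen61}: for each such $q$ factor $q^4+q^2+1 = 3^{\beta_0}\prod_{i=1}^v \wp_i^{\beta_i}$, compute $\delta = 1 - \sum_i \frac{1}{\wp_i} - f_q$ with $f_q = \frac{5}{q}$ or $f_q = \frac{1}{q}+\frac{2}{q^2}$ according to the congruence class of $q$ modulo $6$, and check the inequality $q \geq 2\cdot W(q^2-1)\cdot \Delta$. Concretely, I would run the SageMath routine \textbf{Test\_Delta(q,6,m)} from Appendix~A over the finite range $23 \leq q \leq 354697$ with $\gcd(q,6)=1$, starting with $m = q^2-1$.

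For the (finitely many) pairs where this first pass fails, I would refine the choice of $(m,T)$: either keep $T = x-1$ and shrink $m$ to a divisor of $q^2-1$ with fewer prime factors (e.g.\ $m=2$ or $m=6$), thereby reducing $W(m)$, or enlarge $T$ to the product of two linear factors of $x^6-1$ (possible whenever $q \equiv 1 \pmod 6$, which gives six linear factors), changing $\delta$ and $\Delta$ accordingly. Because $q \geq 23$ and $n=6$ is small, the exponent $\frac{n}{2}-2=1$ in condition $q^{n/2-2}\geq W(m)W(T)\Delta$ leaves very little slack, so this refinement step is where I expect the main obstacle to lie: the sieve is tight and some $q$ with unusually smooth $q^6-1$ may resist every reasonable choice of $(m,T)$.

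For any residual exceptional $q$ in the range, I would fall back on direct verification: choose an irreducible polynomial $g \in \F_p[x]$ of degree $6\log_p q$, let $\alpha \in \F_{q^6}$ be a root, and test primitivity and normality using the verification procedures from Appendix~A, recording the successful elements in the tables of Appendix~B. Since the surviving list is finite and each verification is a single finite computation, this completes the proof for every prime power $q$ with $\gcd(q,6)=1$.
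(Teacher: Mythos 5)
Your proposal follows essentially the same route as the paper: reduce to the finite range $23 \leq q < 354698$ via Proposition~\ref{casen61}, run the sieve of Proposition~\ref{lema3.5} through \textbf{Test\_Delta} over that range, and exhibit explicit primitive $2$-normal elements for the handful of survivors. The only difference is cosmetic — the paper starts \textbf{Test\_Delta} with $m=6$ and then retries with $m=2$, whereas you start with $m=q^2-1$ and then shrink $m$ — but both converge to the same refine-then-verify-explicitly endgame.
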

\begin{proof}
Consider $10^5< q < 1.62\cdot 10^{18}$ and let us suppose first that $q \equiv \pm 1 \pmod 6$. 
Now, we will apply Proposition \ref{lema3.5}
with $m=q^2-1$ and $T=1$.
If $q \equiv 1 \pmod 6$, then $x^6-1$ has six factors of degree $1$ and
if $q \equiv -1 \pmod 6$, then $x^6-1$ has two factors of degree $1$ and
two factors of degree $2$. In any case, we have 
$\frac{6}{q} < \frac{2}{q}+\frac{2}{q^2}$ and $s=4$ or $s=6$, so, in any case, $s \leq 6$.
From Lemma \ref{soma6},
considering the prime factorization of $q^4+q^2+1$ given in such lemma
and the considerations above,
 we get
$\delta \geq 1 - S_{34}- \frac{6}{q}$, $r\leq 34$ and $s \leq 6$. Since
$q \geq 10^5$, we have $\delta >0.46094$ and
$\Delta = 2 + \frac{r+s-1}{\delta} < 86.61$.
From Lemma \ref{cota-t} we have
$W(q^2-1) \leq A_t \cdot q^{\frac{2}{t}}$ for any real number $t$.
Now, 
if $q \geq \left( A_t \cdot 86.61 \right)^{\frac{t}{t-2}}$,
then
from Proposition \ref{lema3.5},
there exists a primitive $2$-normal element in $\F_{q^6}$.
For $t=4.9$, this condition becomes $q \geq 94870$.
Now, let us assume that $q<94870$ and $q \equiv \pm 1 \pmod 6$. 
There are $9221$ prime powers $q$ between $23$ and $94870$
such that $q \equiv  \pm 1 \pmod 6$.
For those prime powers,
we test $q^{\frac{n}{2}-2} \geq W(m) W(T)  \Delta$. The
procedure {\bf Test\_Delta}, with $u=6$,
returns True in all cases, except for the following
prime powers:
$23$, $25$, $29$,  $31$, $37$, $41$, $43$, $47$, $49$, $59$, $61$, $67$, $79$.

Finally, for prime powers above,
table \ref{n=6_1} shows an element
$\alpha \in \F_{q^6}$, primitive $2$-normal over $\F_q$,
such that $g(\alpha)=0$ for
some irreducible polynomial $g \in \F_p[x]$, where $p$ is the characteristic
of $\F_q$.

If $q=2^k$, then $W(x^6-1) \leq 8$ and if $q=3^k$, then $W(x^6-1) = 4$.
Hence from Theorem \ref{condicao em N-f} and Lemma \ref{cota-t}, we test the 
inequality $q\geq 8 \cdot A_t \cdot q^{\frac{6}{t}}$ with $t=8$ and we conclude
that there exists a primitive $2$-normal element in $\F_{q^6}$ for 
$k \geq 58$ (if $q=2^k$) and $k \geq 37$ (if $q=3^k$). We also know
that there exists a primitive $2$-normal element in $\F_{q^6}$ for $q \leq 19$.
We test the condition $q \geq W(m) W(T)  \Delta$ from Proposition \ref{lema3.5}.
The procedure {\bf Test\_Delta}, with $u=3$
for $q=2^k$ ($5 \leq k \leq 57$) and $u=2$ for $q=3^k$ ($3 \leq k \leq 36$),
returns True in all these cases.
\end{proof}

\subsection{Case n=5:} 
From Lemma \ref{divisores de x^n-1}, if $q \equiv \pm 2 \pmod 5$, then
$x^5-1$ has no irreducible quadratic factor and only one linear factor.
If $5 \, | \, q$, we have $x^5-1=(x-1)^5$,
if $q \equiv 1 \pmod 5$, then $x^5-1$ has five linear factors
and if 
$q \equiv -1 \pmod 5$, then $x^5-1$ has one linear factor
and two irreducible factors of degree $2$.
In particular, there exist $2$-normal
elements in $\F_{q^5}$ if and only if $q\equiv 0,\pm 1 \pmod 5$.

\begin{lemma}\label{casen5}
Let $q\equiv 0,\pm 1 \pmod 5$ be a prime power.
There exists a primitive $2$-normal element in $\F_{q^5}$
for $q \geq 507936$.
\end{lemma}
\begin{proof}
Let $t,u$ be positive real numbers such that $t+u \geq 11$ and
let 
$$
q^5-1=\wp_1^{a_1} \cdots \wp_v^{a_{v}} \cdot
\varrho_1^{b_1} \cdots \varrho_r^{b_{r}}
$$
be the prime factorization of $q^5-1$ such that $2 \leq \wp_i \leq 2^t$  or 
$2^{t+u} \leq \wp_i$
for $1 \leq i \leq v$ and
$2^t < \varrho_i < 2^{t+u}$
for $1 \leq i \leq r$.
We use Proposition \ref{lema3.5}, where
we set $T=1$ and
$m= \wp_1^{a_1} \cdots \wp_v^{a_{v}}$, so we have
$$
\delta = 1 - \sum_{i=1}^r \frac{1}{{\varrho}_i} - \sum_{j=1}^s \frac{1}{q^{\deg Q_j}},
$$
where $\mathop{\text{rad}} (x^n-1) = Q_1 \cdots Q_s$. From the considerations above, we have $s \in \{ 1,3, 5\}$ and $1 \leq \deg Q_i \leq 2$. 
Before applying Proposition \ref{lema3.5}, we will bound $\delta$, $\Delta$ and $W(m)$.

Let $S_{t,u}$ be the 
sum of the inverse of all prime numbers between $2^t$ and $2^{t+u}$ and
$r(t,u)$ be the number of those primes.
If $S_{t,u} + \frac{5}{q} <1$, then
$\delta \geq 1 - S_{t,u} - \frac{5}{q}$.
If we choose $q >10^6$ and $(t,u)=(5.8,9.8)$, we get
$S_{t,u} \leq 0.962094$, $\delta > 0.037901$, $r\leq r(t,u)=5085$
and $\Delta =2 + \frac{r+s-1}{\delta}<2+\frac{5085+5-1}{0.037901}\leq
134272.87$.
To bound $W(m)$ we will use Lemma \ref{cota-t}.
Let  $P_t$ be the set of all prime numbers less than $2^t$.
From this, we obtain that $W(m) \leq A_{t,u} m^{\frac{1}{t+u}} \leq A_{t,u} q^{\frac{5}{t+u}}$,
where
$$
A_{t,u}=\prod_{\wp\in P_t}
\frac{2}{\sqrt[t+u]{\wp}} \leq 3678.26, \quad \text{since } (t,u)=(5.8,9.8).
$$
From Proposition \ref{lema3.5}, we conclude that a suficient condition for the existence of 
a primitive $2$-normal element in $\F_{q^5}$ is
$q^{\frac{1}{2}} \geq \Delta_{max} \cdot A_{t,u} \cdot q^{\frac{5}{t+u}}$
(where $\Delta_{max}=134272.87$)
or, equivalently,
$$
q \geq  \left( 
\Delta_{max} \cdot A_{t,u}
\right)^{ \frac{2(t+u)}{t+u-10}} \cong 2.729 \cdot 10^{48}.
$$

Let's suppose now that $q < 2.729 \cdot 10^{48}$. We will apply Proposition \ref{lema3.5} again, but this time we will set
$m=q-1$ and $T=1$. 
We have that $\gcd (q^4+q^3+q^2+q+1,q-1)=1$ or $5$ and if a prime 
different from $5$ divides $q^4+q^3+q^2+q+1$, then it is of the form
$5j+1$.
We will bound $\delta$, $\Delta$ and $W(m)$. Obviously, from 
Lemma \ref{cota-t}, we have 
$W(q-1) \leq A_t \cdot q^{\frac{1}{t}}$ for any real number $t$.
Let $S_k$ and $P_k$ be, respectively, the sum of the inverses
and the product of the first $k$ primes of the form $5j+1$.
Let $r$ be the number of prime factors of 
$q^4+q^3+q^2+q+1$
different from $5$, so $P_r \leq q^4+q^3+q^2+q+1 <5.55\cdot 10^{193}$.
Therefore $r \leq 69$ and $S_r < 0.29717$. As before, if $q>10^5$ then
$\delta \geq 1 - S_r - \frac{5}{10^5} >0.70278$ and
$\Delta = 2 + \frac{r+s-1}{\delta} <105.874$.
So, observing that if $q \geq \left( 105.874 \cdot A_t \right)^{\frac{2t}{t-2}}$ 
for some real number $t$, then 
$q^{\frac{1}{2}} \geq  A_t \cdot q^{\frac{1}{t}}  \cdot 105.874 \geq 
 W(q-1) \cdot  W(1) \cdot \Delta$, and using 
 Proposition \ref{lema3.5}, there exists a primitive $2$-normal element
in $\F_{q^5}$

 For $t=4.7$, the condition above becomes
 $q \geq 1.984\cdot 10^{10}$. If we suppose now $q<1.984\cdot 10^{10}$
 and if we use again Proposition \ref{lema3.5} with $m=q-1$ and $T=1$,
 we get $r \leq 19$ and $S_r <0.2441801$. For $q >10^5$, we also get
 $\delta > 0.7558149$ and $\Delta < 32.43074$.
From Proposition \ref{lema3.5} and taking $t=4.8$, we get that
there exists a primitive $2$-normal element in $\F_{q^5}$ for
$q \geq 3.208 \cdot 10^8$.

We  apply now
Proposition \ref{lema3.5}, 
setting $m=\gcd (q^5-1 , 2 \cdot 3 \cdot 5)$ (so $W(m) \leq 8$) and $T=1$,
and let $r$ and $s\leq 5$ be the natural numbers defined by Proposition \ref{lema3.5}. 
Let
$S_k$ and $P_k$ be, respectively,
the sum of the inverses and the product of the first $k$ primes greater than $5$.
In particular we have $P_r \leq M^5-1 \leq 3.398 \cdot 10^{42}$,
where $M=3.208 \cdot 10^8$. This implies that
$r \leq 25$, and if we suppose $q \geq 10^6$ then
$\delta \geq 1 - S_r - \frac{5}{q} > 0.20155$ and $\Delta \leq 145.885$. The condition
from Proposition \ref{lema3.5} is
$q^{\frac{1}{2}} \geq 1167.08 \geq 8  \cdot  145.885  \geq W(m) W(T)  \Delta$.
So, if 
$q \geq 1.363 \cdot 10^6$, then there exists a primitive $2$-normal element 
in $\F_{q^5}$. 

Finally, we apply one last time 
Proposition \ref{lema3.5}, 
setting $m=\gcd (q^5-1 , 2 \cdot 3 \cdot 5)$ (so $W(m) \leq 8$) and $T=1$.
This time we get $r \leq 19$ and $S_r \leq 0.7359$. If we suppose $q \geq 10^6$,
we get $\delta \geq 0.264104$, $\Delta \leq 89.087$
and $q \geq 507936$.
\end{proof}

If we try to use procedure {\bf Test\_Delta}, with $u=2\cdot 3$,
for all prime powers such that 
$q \equiv 0,\pm 1 \pmod 5$ and $q <507936$, 
it will produce a list of $127$ prime powers for which {\bf Test\_Delta} returns False. For this reason
we will try another approach.

\begin{lemma}\label{factorx5-1}
Let $q$ be a prime power such that $q \equiv \pm 1 \pmod 5$. Then
$x^5-1=(x-1)(x^2-bx+1)(x^2+(b+1)x+1)$, where
$b\in \F_q$ is a root of $x^2+x-1=0$.
\end{lemma}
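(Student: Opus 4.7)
The plan is to avoid any casework on the characteristic by working directly with a primitive fifth root of unity. Since $q \equiv \pm 1 \pmod 5$, the order of $q$ modulo $5$ divides $2$, so there exists a primitive fifth root of unity $\zeta \in \F_{q^2}$ (with $\zeta \in \F_q$ in the case $q \equiv 1 \pmod 5$). I would then set $b := \zeta + \zeta^{-1}$ and observe that $b$ is invariant under the Frobenius $x \mapsto x^q$: indeed, $b^q = \zeta^q + \zeta^{-q}$, and whether $\zeta^q = \zeta$ or $\zeta^q = \zeta^{-1}$, the result is $b$ again. Hence $b \in \F_q$.

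Next, I would use the relation $1 + \zeta + \zeta^2 + \zeta^3 + \zeta^4 = 0$ to check that $b$ satisfies the required quadratic. Expanding $b^2 = \zeta^2 + 2 + \zeta^{-2}$ gives
\[
b^2 + b - 1 = (\zeta^2 + \zeta^{-2}) + (\zeta + \zeta^{-1}) + 1 = \zeta^{-2}\bigl(1+\zeta+\zeta^2+\zeta^3+\zeta^4\bigr) = 0,
\]
so $b$ is a root of $x^2+x-1$. Moreover, from $b^2 = 1-b$ one obtains $\zeta^2+\zeta^{-2} = b^2 - 2 = -1-b$.

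Finally, I would verify the factorization. The quadratic with roots $\zeta$ and $\zeta^{-1} = \zeta^4$ is
\[
(x-\zeta)(x-\zeta^{-1}) = x^2 - (\zeta+\zeta^{-1})x + 1 = x^2 - bx + 1,
\]
while the quadratic with roots $\zeta^2$ and $\zeta^{-2} = \zeta^3$ is
\[
(x-\zeta^2)(x-\zeta^{-2}) = x^2 - (\zeta^2+\zeta^{-2})x + 1 = x^2 + (b+1)x + 1.
\]
Multiplying by $(x-1)$ yields $\prod_{i=0}^{4}(x - \zeta^i) = x^5 - 1$, which is the claimed factorization. There is no real obstacle here; the only subtlety is choosing the right element $b$ so that Frobenius invariance simultaneously forces $b \in \F_q$ and produces both quadratic factors from a single algebraic identity, rather than arguing separately in odd characteristic (via $\operatorname{disc}(x^2+x-1) = 5$ and quadratic reciprocity) and in characteristic $2$ (where $x^2+x-1 = x^2+x+1$ splits over $\F_4 \subseteq \F_q$).
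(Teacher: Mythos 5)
Your proof is correct and follows essentially the same route as the paper: both take a nontrivial fifth root of unity $\zeta$ in $\F_{q^2}$, set $b=\zeta+\zeta^{-1}$, show $b\in\F_q$ via Frobenius, and derive $b^2+b=1$ from $1+\zeta+\zeta^2+\zeta^3+\zeta^4=0$. Your version is slightly cleaner in that it handles the two congruence cases uniformly through $\zeta^q\in\{\zeta,\zeta^{-1}\}$ and verifies the factorization by pairing roots, whereas the paper argues the two cases separately with explicit primitive elements and expands the product of the quadratics directly, but these are only cosmetic differences.
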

\begin{proof}
Let $\xi \neq 1$ be a root of $x^5-1$ in $\F_{q^2}$ and define
$b=\xi + \xi^{-1}$. If $q \equiv 1 \pmod 5$, then obviously
$\xi\in\F_q$,
which implies that $b \in \F_q$. If $q \equiv -1 \pmod 5$, then
 there exists a primitive element $\alpha$ in $\F_{q^2}$ such that
 $\xi =\alpha^{\frac{q^2-1}{5}}$. Observe that
 $$
 \xi^q = \alpha^{\frac{q(q-1)(q+1)}{5}} = 
\left( \alpha^{\frac{q+1}{5}} \right)^{q^2-q}=
\left( \alpha^{\frac{q+1}{5}} \right)^{1-q}=
\xi^{-1}.
 $$
This implies that $b^q=\xi^{-1}+\xi=b$, so we also  have $b \in \F_q$.
Since $\xi^4+\xi^3+\xi^2+\xi+1=0$, we get that
$
b^2+b=
\xi^{-2} (\xi^4+\xi^3+\xi^2+\xi+1) + 1 = 1
$
and $(x^2-bx+1)(x^2+(b+1)x+1)=x^4+x^3+x^2+x+1$.
\end{proof}

\begin{lemma}\label{factorx5-1normal2}
Let $q$ be a prime power such that $q \equiv \pm 1 \pmod 5$,
$b\in \F_q$ be a root of $x^2+x-1=0$, $\alpha$ be a normal
element in $\F_{q^5}$ and $f=x^2-bx+1$. Then
$L_f(\alpha)+a$ is a $2$-normal element in
$\F_{q^5}$ for all $a \in \F_q$ except for only one value of $a$.
\end{lemma}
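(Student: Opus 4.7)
The plan is to track the $\F_q$-order of $\beta=L_f(\alpha)+a$ and show that for all but one value of $a$ this order has degree exactly $3=n-2$, making $\beta$ a $2$-normal element by Theorem \ref{equiv-knormal}(iii).

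First I would identify the possible $\F_q$-orders. Since $\alpha$ is normal, its $\F_q$-order equals $x^5-1$, so by Lemma \ref{factorx5-1} and the standard fact that $L_g(\alpha)$ has $\F_q$-order $(x^5-1)/\gcd(x^5-1,g)$, the element $L_f(\alpha)$ has $\F_q$-order $\frac{x^5-1}{f}=(x-1)(x^2+(b+1)x+1)$. On the other hand $a\in\F_q$ has $\F_q$-order dividing $x-1$. Therefore $\beta$ has $\F_q$-order dividing $(x-1)(x^2+(b+1)x+1)$, and the proper divisors of this polynomial are $1$, $x-1$ and $x^2+(b+1)x+1$.

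Next I would dispose of the first two possibilities at once: an $\F_q$-order dividing $x-1$ means $\beta\in\F_q$, which would force $L_f(\alpha)=\beta-a\in\F_q$, contradicting that $L_f(\alpha)$ is $2$-normal (its $\F_q$-order has degree $3$, so it cannot lie in $\F_q$). Hence these cases cannot occur for any $a$.

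The only remaining obstruction, and the main calculation, is the case $L_{x^2+(b+1)x+1}(\beta)=0$. Using linearity and the identity $f\cdot(x^2+(b+1)x+1)=x^4+x^3+x^2+x+1$ proved in Lemma \ref{factorx5-1}, I compute
\begin{equation*}
L_{x^2+(b+1)x+1}(L_f(\alpha))=L_{x^4+x^3+x^2+x+1}(\alpha)=\mathrm{Tr}_{\F_{q^5}/\F_q}(\alpha),
\end{equation*}
while for $a\in\F_q$ we have $L_{x^2+(b+1)x+1}(a)=a+(b+1)a+a=(b+3)a$. Thus $L_{x^2+(b+1)x+1}(\beta)=\mathrm{Tr}(\alpha)+(b+3)a$. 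Because $q\equiv\pm1\pmod 5$, the characteristic is not $5$, and plugging $-3$ into $x^2+x-1$ gives $5\neq 0$ in $\F_q$, so $b+3\neq 0$. Consequently the equation $\mathrm{Tr}(\alpha)+(b+3)a=0$ has the unique solution $a=-\mathrm{Tr}(\alpha)/(b+3)$, and for every other $a\in\F_q$ the element $\beta$ has full $\F_q$-order $(x-1)(x^2+(b+1)x+1)$ of degree $3$, hence is $2$-normal. I do not expect any serious obstacle: the argument is essentially a bookkeeping exercise on $\F_q$-orders, with the small subtlety being the verification that $b+3\neq 0$, which is guaranteed by the hypothesis on $q$.
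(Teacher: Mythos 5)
Your overall strategy (track the $\F_q$-order of $\beta=L_f(\alpha)+a$ and reduce the failure of $2$-normality to the single linear condition $\mathrm{Tr}(\alpha)+(b+3)a=0$) is the same as the paper's, and your computation of $L_{x^2+(b+1)x+1}(\beta)$ and the verification that $b+3\neq 0$ are correct. However, there is a genuine gap in your enumeration of the possible orders. You assert that the proper divisors of $(x-1)(x^2+(b+1)x+1)$ are only $1$, $x-1$ and $x^2+(b+1)x+1$. This is true when $q\equiv -1\pmod 5$, because then $x^2+(b+1)x+1$ is irreducible over $\F_q$; but when $q\equiv 1\pmod 5$ the fifth roots of unity lie in $\F_q$, so $x^2+(b+1)x+1=(x-\xi)(x-\xi^{-1})$ splits, and $(x-1)(x^2+(b+1)x+1)$ acquires the additional degree-two divisors $(x-1)(x-\xi)$ and $(x-1)(x-\xi^{-1})$. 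If the $\F_q$-order of $\beta$ were one of these, $\beta$ would be $3$-normal, and neither of your two cases excludes this: such a divisor neither divides $x-1$ (so your ``$\beta\in\F_q$'' argument does not apply) nor divides $x^2+(b+1)x+1$ (so vanishing of $L_h(\beta)$ for such $h$ is not implied by, and does not imply, the vanishing of $L_{x^2+(b+1)x+1}(\beta)$).

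The repair is exactly the observation the paper makes: for any divisor $h$ of $\frac{x^5-1}{f}$ with $\deg h\leq 2$ and $x-1\mid h$, one has $L_h(a)=0$ while $L_h(L_f(\alpha))=L_{hf}(\alpha)\neq 0$, because $\deg(hf)\leq 4<5$ and $\alpha$ is normal (its $\F_q$-order is $x^5-1$). Hence $L_h(\beta)\neq 0$ for every such $h$, so any order of degree at most $2$ must be prime to $x-1$, i.e.\ must divide $x^2+(b+1)x+1$, and only then does the problem reduce to your trace computation. With that one extra sentence your argument becomes complete and coincides with the paper's proof.
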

\begin{proof}
If we let $g= (x-1)(x^2+(b+1)x+1)$, we get $fg=x^5-1$ and, for every element $\gamma \in \F_{q^5}$
we have $0=L_{fg}(\gamma)=L_g(L_f(\gamma))$.
Since $x-1$ is a factor of $g$, then $L_g(a)=0$ for every
$a \in \F_q$. In particular, if 
$\alpha$ is a normal element in $\F_{q^5}$ then
$L_g(L_f(\alpha)+a)=L_g(L_f(\alpha))+L_g(a)=0$ for
every $a \in \F_q$,
so $L_f(\alpha)+a$ has $F_q$-order $\deg h\leq 3$ for some divisor $h$ of $g$. 
From Theorem \ref{equiv-knormal},  we get that $L_f(\alpha)+a$ is
$k$-normal where $k\geq 2$. Let us suppose that $\deg h \leq 2$. 
If $x-1 \mid h$, then $L_h(a)=0$ and $L_h(L_f(\alpha))\neq 0$, since $\alpha$ is normal,
so, in this case, $L_h(L_f(\alpha)+a)\neq 0$. This means that if 
$\deg h \leq 2$, then $x-1 \nmid h$ and, in particular, $h \mid x^2+(b+1)x+1$.
Note that $L_{x^2+(b+1)x+1} (L_f(\alpha)+a)=0$ is equivalent to
$L_{x^4+x^3+x^2+x+1}(\alpha)+L_{x^2+(b+1)x+1} (a)=0$.
Since $L_{x^2+(b+1)x+1} (a)=(b+3)a$, then 
$\alpha^4+\alpha^3+\alpha^2+\alpha+1=-(b+3)a$. 
If $b=-3$, then $(-3)^2+(-3)-1=0$, which is not possible because $5 \nmid q$ and hence
$b\neq -3$. Therefore there is only one possible value of $a$ such that
$Tr_{q^5/q}(\alpha)=-(b+3)a$. In particular, this means that if
$a \neq -(b+3)^{-1} \cdot Tr_{q^5/q}(\alpha)$, then $L_f(\alpha)+a$ is
$2$-normal.
\end{proof}

In fact, if $j \in \F_q$ is the only value for which $L_f(\alpha)+j$
is not $2$-normal, then
$g(a)=(a-j)(L_f(\alpha)+a)$ is $2$-normal for every $a \in \F_q \backslash \{ j \}$
and for $j$ we have $g(j)=0$. This means that if $g(a)$ is primitive, then $g(a)$ is 
also $2$-normal.
We finish the case $n=5$ with a computational approach
using the idea from Lemma \ref{factorx5-1normal2}.
\begin{proposition}\label{finaln5}
Let $q$ be a prime power. 
There exists a primitive element $2$-normal in $\F_{q^5}$ if and only if
$q \equiv 0,\pm 1 \pmod 5$. 
\end{proposition}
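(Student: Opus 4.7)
For the necessity direction, if $q \equiv \pm 2 \pmod 5$, then by Lemma \ref{divisores de x^n-1} one has $I_5(1) = \gcd(q-1,5)=1$ and $I_5(2) = \tfrac{1}{2}(\gcd(q^2-1,5)-\gcd(q-1,5))=0$, so $x^5-1$ has no divisor of degree $2$ over $\F_q$. Theorem \ref{counting-knormal} then rules out the existence of any $2$-normal element in $\F_{q^5}$, let alone a primitive one.

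For sufficiency, Lemma \ref{casen5-3} already handles every prime power with $q \geq \mathfrak{M}$, leaving the roughly $328$ prime powers $q \equiv 0, \pm 1 \pmod 5$ with $q < \mathfrak{M}$ on which the sieve of Proposition \ref{lema3.5} was insufficient. My plan for $q \equiv \pm 1 \pmod 5$ is to refine the counting by exploiting Lemma \ref{factorx5-1normal2}: fix a normal element $\alpha \in \F_{q^5}$ (afforded by the Primitive Normal Basis Theorem), put $\beta_a := L_f(\alpha)+a$ for $a \in \F_q$, and note that all but at most one $\beta_a$ is $2$-normal. It therefore suffices to show that at least two of the $\beta_a$ are primitive. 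Writing
$$
N(\alpha) \;:=\; \sum_{a \in \F_q} w_{q^5-1}(\beta_a)
\;=\; \theta(q^5-1) \sum_{a \in \F_q} \int_{d \mid q^5-1} \eta_d\bigl(L_f(\alpha)+a\bigr),
$$
and isolating the principal contribution $q$ from $d=1$, I would bound the remaining terms by applying Lemma \ref{cotaenFq} to the degree-one rational function $L_f(\alpha)+t \in \F_{q^5}(t)$, after verifying that $\prod_{i=0}^{4}\bigl(L_f(\alpha)+t^{q^i}\bigr)$ is not an $\ord(\eta_d)$-th power, a fact that follows from the normality of $\alpha$. The expected estimate has the shape $N(\alpha) \geq \theta(q^5-1)\bigl(q - c\,W(q^5-1)\sqrt{q}\bigr)$, an essentially $\sqrt{q}$-improvement over Proposition \ref{lema3.5}, since $2$-normality is now automatic.

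For $q \equiv 0 \pmod 5$ we have $x^5-1=(x-1)^5$ and the only quadratic divisor is $f=(x-1)^2$; here $W(x^5-1)$ is trivial and the refined estimate becomes even more favourable, but a separate parameterization of $2$-normal elements that opens up the one-parameter family, built on the characteristic-$5$ kernel structure of $L_{x^5-1}$, will be required to play the role of Lemma \ref{factorx5-1normal2}.

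The main obstacle will be pushing the refined character sum bound far enough to clear all $328$ outstanding prime powers. For any prime powers that still elude the estimate, I will fall back on a direct computer search, exhibiting an explicit primitive $2$-normal element in a table in the same style as Tables \ref{n=6_1}, \ref{n=6_3} and \ref{n6q23}. Together with the two preceding paragraphs, this finishes the remaining case $n=5$ of Theorem \ref{Final}.
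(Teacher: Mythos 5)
Your proposal is correct in outline and, in its essentials, follows the paper's route: necessity via the non-existence of a quadratic divisor of $x^5-1$ when $q \equiv \pm 2 \pmod 5$, reduction to the finitely many prime powers below $\mathfrak{M}$ via Lemma \ref{casen5-3}, and then the one-parameter family $L_f(\alpha)+a$ of Lemma \ref{factorx5-1normal2} to settle the residual cases (the paper's procedure \textbf{TestExplicit5} is exactly a direct search over this family, and your observation that two primitive members suffice because at most one $\beta_a$ fails to be $2$-normal is the right way to use that lemma). The genuinely different ingredient is your intermediate character-sum estimate over $a \in \F_q$, which is precisely the paper's $n=4$ machinery (Theorem \ref{condn4} and Proposition \ref{sieven4}, which apply verbatim with $n=5$ since a $2$-normal $\beta$ has $\F_q$-order of degree $3$ and so lies in no proper subfield). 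However, your assessment of its strength is off: for $n=5$, $k=2$ Proposition \ref{lema3.5} already reduces to a condition of the form $q^{1/2} \geq W(m)W(T)\Delta$, while your estimate gives $q^{1/2} \geq (n-1)W(m)\Delta'=4\,W(m)\Delta'$ --- the same exponent of $q$ and a comparable constant, so there is no ``$\sqrt{q}$-improvement'' and the new bound will clear few, if any, of the $328$ outstanding prime powers; moreover, to guarantee \emph{two} primitive values of $a$ you need the slightly stronger inequality $N(\alpha)\geq 2$ rather than mere positivity. In practice, then, essentially all of the residual cases fall to your computer-search fallback, which is exactly what the paper does (it runs the search directly, finds it succeeds for every outstanding $q$ except $q=64$, where $j$ ranges only over $\F_2$, and exhibits an explicit element there); so your plan is viable, but only because of the fallback, not because of the refined estimate. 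You would also still need to supply the characteristic-$5$ analogue of Lemma \ref{factorx5-1normal2} that you flag (the paper sidesteps this by testing $2$-normality of $L_{(x-1)^2}(a)+j$ directly inside the procedure).
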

\begin{proof}
From Lemma \ref{casen5},
we only need to prove the existence of a 
primitive $2$-normal element in $\F_{q^5}$ for
$q \equiv 0,\pm 1 \pmod 5$ such that $q <507936$.

 
Inspired by Lemma \ref{factorx5-1normal2}, we use the
SageMath
procedure
named
{\bf TestExplicit5} (see Appendix A)
to find
a primitive $2$-normal element in $\F_{q^5}$. In this procedure, $a$ generates $\F_{q^5}$,
$j \in \F_p$,
$b \in \F_q$ is a root of $x^2+x-1=0$ and $\beta = L_{x^2-bx+1}(a)$ for
$q \equiv \pm 1 \pmod 5$. If $5 \, | \, q$, then
$\beta =L_{(x-1)^2}(a)$. From Lemma \ref{factorx5-1normal2}, 
we get that if $q\equiv \pm 1 \pmod 5$, then
$\beta + j$ is always $2$-normal except, maybe, for one value of $j$. In any case
($q\equiv 0 \pmod 5$ or $q \equiv \pm 1 \pmod 5$), this procedure
returns True if $\beta + j$ is primitive $2$-normal in $\F_{q^5}$ for some $j \in \F_p$,
where $p = \mathop{\text{char}} \F_q$.

For all prime powers $q$ such that
$q \equiv 0,\pm 1 \pmod 5$ for which $q <507936$ and
{\bf Test\_Delta} (with $u=2\cdot 3$) returns False,
the procedure {\bf TestExplicit5(q)} returns False only for $q=64$.

For $q=64$, we may use procedures 
{\bf ordmodqn} and {\bf Normal} (see Appendix A)
to see that $\alpha$ is a primitive $2$-normal element
in $\F_{q^5}$ where $\alpha$ is a root of
$$
g(x) = 
x^{30} + x^{27} + x^{26} + x^{23} + x^{22} 
+ x^{21} + x^{16} +
 x^{14} + x^{12} + x^9  
+ x^6 + x^5 + x^3 + x + 1.
$$
This proves the proposition.
\end{proof}

\section{Case $n=4$}\label{section=4}

In \cite{lucas} after Remark 3.5, the author proved
that there is no 
primitive $2$-normal element in $\F_{q^4}$ if $q \equiv 3 \pmod 4$. Suppose now that $q$ is a power of $2$. In this case, $x^4-1=(x+1)^4$ and 
$f=(x+1)^2$, so 
if $\beta$ is a $2$-normal element, there exists a normal element $\alpha \in \F_{q^4}$
such that $\beta = L_{(x+1)^2}(\alpha)=\alpha^{q^2}+\alpha$.
Since $\beta^{q^2}=(\alpha^{q^2}+\alpha)^{q^2}=\alpha+\alpha^{q^2}=\beta$,
we have that $\beta$ is not a primitive element in $\F_{q^4}$. Therefore, if there exists a primitive $2$-normal element in $\F_{q^4}$, then $q \equiv 1 \pmod 4$. In this case, we may factor $x^4-1$ 
into four linear factors, say $x^4-1=(x+1)(x-1)(x+b)(x-b)$, where $b \in \F_q$ and $b^2=-1$.

Throughout this section, we will consider
$q \equiv 1 \pmod 4$, $b\in \F_q$ such that $b^2=-1$,
$f(x)=(x+1)(x+b) \in \F_{q}[x]$ a factor of $x^4-1$ of degree two and
$\alpha \in \F_{q^4}$ a 
normal 
element in $\F_{q^4}$.
Thus,  $L_f(\alpha)$ is a $2$-normal element in $\F_{q^4}$ (see \cite{lucas}, Lemma 3.1). The following result tells us that we can generate more $2$-normal elements if they are also primitive, more precisely we have

\begin{lemma}\label{primes2normal}
Let $u,v \in \F_q^*$ and $f(x)=(x+1)(x+b) \in \F_{q}[x]$, where $b \in \F_q$ satisfies $b^2=-1$. If $\gamma=u L_f(\alpha)+v$ is primitive in $\F_{q^4}$, then $\gamma$ is $2$-normal in $\F_{q^4}$.
\end{lemma}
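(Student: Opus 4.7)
The plan is to directly compute the $\F_q$-order of $\gamma$ and show it has degree exactly $2$, then invoke Theorem \ref{equiv-knormal}. First I would show the $\F_q$-order of $\gamma$ divides $(x-1)(x-b)$, which has degree $2$, so $\gamma$ is $k$-normal for some $k \geq 2$. Then I would rule out $k \geq 3$ using primitivity.

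For the first step, I apply $L_{(x-1)(x-b)}$ to $\gamma = u L_f(\alpha) + v$ using $\F_q$-linearity (note $u, v \in \F_q$ and the polynomial $(x-1)(x-b)$ has coefficients in $\F_q$ since $b \in \F_q$). By Lemma 2.5(b) of the excerpt we have
$$
L_{(x-1)(x-b)}\bigl(L_f(\alpha)\bigr) = L_{(x-1)(x-b)\cdot f}(\alpha) = L_{x^4-1}(\alpha) = \alpha^{q^4} - \alpha = 0,
$$
since $\alpha \in \F_{q^4}$. Separately, $L_{(x-1)(x-b)}(v) = v^{q^2} - (1+b)v^q + bv = v(1 - (1+b) + b) = 0$ because $v \in \F_q$. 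Combining these two vanishings and the $\F_q$-linearity of $L_{(x-1)(x-b)}$, we obtain $L_{(x-1)(x-b)}(\gamma) = 0$. Hence the $\F_q$-order of $\gamma$ is a monic divisor of $(x-1)(x-b)$ and so has degree at most $2$, meaning $\gamma$ is $k$-normal with $k \geq 2$ by Theorem \ref{equiv-knormal}.

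Now I rule out $k=3$ and $k=4$. Since $\gamma$ is primitive, $\gamma \neq 0$, so $k \neq 4$. Suppose for contradiction that $\gamma$ is $3$-normal, i.e.\ $\dim_{\F_q} V_\gamma = 1$. Then $\gamma^q = c\gamma$ for some $c \in \F_q$, and $c \neq 0$ because $\gamma \neq 0$. This gives $\gamma^{q-1} = c \in \F_q^*$, and since every element of $\F_q^*$ has order dividing $q-1$, we obtain $\gamma^{(q-1)^2} = 1$. However,
$$
(q-1)^2 < (q-1)(q+1)(q^2+1) = q^4 - 1,
$$
contradicting the fact that $\gamma$ has multiplicative order $q^4 - 1$. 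Therefore $\gamma$ is exactly $2$-normal, which proves the lemma. The argument is straightforward once the composition identity $L_{(x-1)(x-b)} \circ L_f = L_{x^4-1}$ is exploited; the only mild subtlety is the routine verification that $L_{(x-1)(x-b)}$ annihilates the constant $v$, which follows immediately from $v \in \F_q$.
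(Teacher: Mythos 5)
Your proof is correct and follows essentially the same route as the paper: both apply $L_{(x-1)(x-b)}$ to $\gamma$, use $L_{(x-1)(x-b)}\circ L_f=L_{x^4-1}$ together with the vanishing on the constant $v\in\F_q$ to get $k\geq 2$, and then exclude the degenerate case via $\gamma^{(q-1)^2}=1$ contradicting primitivity. The only cosmetic difference is that you phrase the conclusion in terms of the $\F_q$-order while the paper phrases it as linear dependence of $\{\gamma,\gamma^q,\gamma^{q^2}\}$; these are equivalent by Theorem \ref{equiv-knormal}.
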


\begin{proof}
We know that $L_f$ is a linear transformation over $\F_q$, so $L_{(x^4-1)/f}(u L_f(\alpha)+v)= uL_{(x-1)(x-b)}(L_f(\alpha))+L_{(x-1)(x-b)}(v)= uL_{x^4-1}(\alpha)+L_{(x-b)}\left(v^q-v \right)=0$. Since
$\frac{x^4-1}{f}$ is a degree two polynomial, we have that
 the set $\{\gamma, \gamma^q, \gamma^{q^2}\}$ is linearly dependent. Suppose that $\gamma$ and $\gamma^q$ are linearly dependent, thus $\gamma^{q-1} \in \F_q^*$ and $\ord_q(\gamma) \leq (q-1)^2<q^4-1$, which is a contradiction. Thus, $\langle \gamma,\gamma^q, \gamma^{q^2}, \gamma^{q^3}\rangle=\langle \gamma, \gamma^q \rangle$ and $\gamma$ is $2$-normal, by Theorem \ref{equiv-knormal}.
\end{proof}

Let us define a function $g(x)=x+ \beta $,
where $\beta$ is a $2$-normal element in $\F_{q^n}$.
We need conditions for the existence of primitive 
elements
of the form $g(a)$, where $a \in \F_q^*$, because
in the case where $n=4$, 
if we choose $\beta =L_f(\alpha)$ then, from Lemma \ref{primes2normal},
primitivity of $g(a)$ implies $2$-normality of $g(a)$. Observe also that if $\beta =L_f(\alpha) \in \F_{q^4}$, then
$\beta \notin \F_{q^2}$. Indeed, if $\beta \in \F_{q^2}$,
then $0=L_{\frac{x^4-1}{f}}(\beta)=\beta^{q^2}-(b+1)\beta^q +b\beta = (b+1)(\beta - \beta^q)$. Since $b^2=-1$ and $q \equiv 1 \pmod 4$,
we get $b \neq -1$ and
$\beta \in \mathbb{F}_q$, which is a contradiction.




\begin{theorem}\label{condn4}
Let $q$ be a prime power, 
let $m\in \mathbb{N}$ be a divisor of $q^4-1$,
and let $\beta=L_f(\alpha)$ be a $2$-normal element in 
$\F_{q^4}$, where 
$\alpha \in \F_{q^4}$ is a normal element,
$f(x)=(x+1)(x+b) \in \F_{q}[x]$ and $b \in \F_q$ satisfies $b^2=-1$.
Let $N_\beta(m)$ be the number of  elements $a \in \F_q$ such that $g(a)=a+\beta$ is $m$-free. If $q^{1/2} \geq 3W(m)$,
then $N_\beta(m)>0$, i.e., there exists an element of the form $g(a)$ in $\F_{q^4}^*$ which is $m$-free.
\end{theorem}
\begin{proof}
Since $\beta=L_f(\alpha) \notin \mathbb{F}_{q^2}$, we have that $\mathbb{F}_{q^4}=\mathbb{F}_q(\beta)$. 
Therefore, from
Lemma \ref{cotaenFq}, for any non-trivial multiplicative character $\chi$ over $\F_{q^4}$,
we have
\begin{equation}\label{cotaparag}
 \left| \sum \limits_{a \in \F_q} \chi(g(a)) \right| \leq 3 \sqrt{q},
\end{equation}
Now, from Proposition \ref{caracteristica-mlivre}, we have that
\begin{align*}
N_\beta(m) & =\sum_{a \in \F_q} w_m\left( g(a) \right) = 
\theta(m) \left( \sum_{ a \in \F_q} \chi_1(g(a)) + \int \limits_{d|m, \ d \neq 1} \sum_{ a \in \F_q} \chi_d(g(a)) \right).
\end{align*}
Therefore we obtain the following estimative, using inequality \eqref{cotaparag}
\begin{align*}
\left| \frac{N_\beta(m)}{\theta(m)} - q \right| & \leq \left |  \sum_{\substack{d|m \\ d \neq 1}}  
\dfrac{\mu(d)}{\varphi(d)} \sum_{(d)} \sum_{a \in \F_q} \chi_d(g(a)) \right| \leq 3 \sqrt{q} \sum_{\substack{d|m \\ d \neq 1}} |\mu(d)|  
= 3(W(m)-1)\sqrt{q},
\end{align*}
Then $\frac{N_\beta(m)}{\theta(m)}\geq  q-3(W(m)-1)\sqrt{q}$, and we obtain the desired result.
\end{proof} 

The next result's proof is similar to the proof of Proposition \ref{lema3.5}, and hence is omitted.


\begin{proposition}\label{sieven4}
Let $m \in \mathbb{N}$ be a divisor of $q^4-1$
and let $\beta$ be a $2$-normal element as in Theorem \ref{condn4}.
Let ${\wp}_1,\ldots , {\wp}_r$ be prime numbers such that
$rad(q^4-1)=rad(m) \cdot {\wp}_1 \cdot {\wp}_2 \cdots {\wp}_r$.
Suppose that 
$\delta=1-\sum_{i=1}^{r}\frac{1}{{\wp}_i}>0$
and let  $\Delta=\frac{r-1}{\delta}+2$. If
$q^{\frac{1}{2}} \geq 3  W(m) \Delta$,
then $N_\beta(q^4-1)>0$.
\end{proposition}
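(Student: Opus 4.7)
The plan is to combine the sieve inequality \eqref{sieve4} with bounds on the difference $N_\beta(m\wp_i) - \theta(\wp_i) N_\beta(m)$, obtained via character-sum estimates in the style of the proof of Proposition \ref{lema3.5}, but simpler, since here only multiplicative characters appear. I would first rewrite \eqref{sieve4} in the form
$$N_\beta(q^n-1) \geq \sum_{i=1}^{r} \bigl[ N_\beta(m\wp_i) - \theta(\wp_i) N_\beta(m) \bigr] + \Bigl( \sum_{i=1}^{r} \theta(\wp_i) - (r-1) \Bigr) N_\beta(m),$$
and note that the coefficient multiplying $N_\beta(m)$ equals $1 - \sum_i 1/\wp_i = \delta$.

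Next, for each $i$, I would expand $N_\beta(m\wp_i)$ using the characteristic function from Proposition \ref{caracteristica-mlivre}, splitting the sum $\int_{d \mid m\wp_i}$ into the part with $d \mid m$ (which contributes exactly $\theta(\wp_i) N_\beta(m)$) and the part with $\wp_i \mid d$. The non-triviality condition $\beta \notin \F_{q^d}$ for $d < n$, together with Lemma \ref{cotaenFq} as used in the proof of Theorem \ref{condn4}, bounds each non-trivial character sum by $(n-1)\sqrt{q}$. Since the divisors $d$ with $\wp_i \mid d$ and $d \mid m\wp_i$ are in bijection with the square-free divisors of $m$, there are $W(m)$ such characters, giving
$$\bigl| N_\beta(m\wp_i) - \theta(\wp_i) N_\beta(m) \bigr| \leq \theta(m)\theta(\wp_i) W(m)(n-1)\sqrt{q}.$$

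Substituting this bound together with the lower estimate $N_\beta(m) \geq \theta(m)\bigl(q - (n-1)W(m)\sqrt{q}\bigr)$ coming from Theorem \ref{condn4}, I would obtain
$$N_\beta(q^n-1) \geq \delta\,\theta(m)\bigl(q - (n-1)W(m)\sqrt{q}\bigr) - \theta(m)(n-1)W(m)\sqrt{q} \sum_{i=1}^{r}\theta(\wp_i).$$
The algebraic step that makes everything collapse is the identity
$$\delta + \sum_{i=1}^{r}\theta(\wp_i) = \delta + r - \sum_{i=1}^{r}\frac{1}{\wp_i} = r - 1 + 2\delta = \delta \Delta,$$
which is exactly the definition of $\Delta$. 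Factoring, the right-hand side equals $\delta\,\theta(m)\bigl(q - (n-1)W(m)\Delta\sqrt{q}\bigr)$, which is strictly positive under the hypothesis $q^{1/2} \geq (n-1)W(m)\Delta$.

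The main conceptual obstacle is simply verifying that Lemma \ref{cotaenFq} applies uniformly to the shifted polynomial $g(x) = x + \beta$ for every non-trivial character of every square-free modulus; this is handled by the same non-triviality argument as in Theorem \ref{condn4}, which crucially uses $\beta \notin \F_{q^d}$ for proper divisors $d$ of $n$. Everything else is bookkeeping in the spirit of Proposition \ref{lema3.5}.
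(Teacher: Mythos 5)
Your proposal is correct and is exactly the argument the paper intends: the authors explicitly omit the proof, stating it is analogous to Proposition \ref{lema3.5}, and your write-up carries out that analogy faithfully — the sieve rearrangement with coefficient $\delta$, the splitting of $\int_{d\mid m\wp_i}$ into the $d\mid m$ part and the $\wp_i\mid d$ part (the latter having $W(m)$ square-free terms, each bounded by $(n-1)\sqrt{q}$ via Lemma \ref{cotaenFq} and the hypothesis $\beta\notin\F_{q^d}$), and the identity $\delta+\sum_i\theta(\wp_i)=\delta\Delta$. No gaps.
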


Now,  we get the following sufficient conditions for the existence of primitive elements in
$\F_{q^4}$ of the form $g(a)=a + \beta$. If
\begin{equation}\label{cond-g(a)-primitive}
q^{1/2}> 3 W(q^4-1) \quad \text{or}
\quad 
q^{1/2}> 3 W(m) \Delta
\end{equation}
(for some $m \mid q^4-1$ and a specific value of $\Delta$),
then there exists a primitive element in $\F_{q^4}$ of the form $g(a)=a+\beta$
with $a \in \F_q$; also this element is $2$-normal by Lemma \ref{primes2normal}.
Let us use Proposition \ref{sieven4} in combination with Lemma
\ref{primes2normal} to find a bound for the values of $q$
such that there exists a primitive $2$-normal element in $\F_{q^4}$.

\begin{theorem}\label{teo-cota-1}
Let $q$ be a prime power. There exists a primitive $2$-normal element
in $\F_{q^4}$ if and only if $q \equiv 1 \pmod 4$.
\end{theorem}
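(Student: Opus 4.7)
The plan is to combine the necessary condition already established with the asymptotic sufficient bound from Proposition \ref{cota4fim}, and then dispose of the remaining finite range of cases by a mixture of a sieve test and explicit computation.

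First, the necessity is immediate from Lemma \ref{qcong1mod4}: if a primitive $2$-normal element exists in $\F_{q^4}$, then $q \equiv 1 \pmod 4$. For the sufficiency, fix $q \equiv 1 \pmod 4$. If $q \geq 1513078$, Proposition \ref{cota4fim} directly gives the existence of a primitive $2$-normal element in $\F_{q^4}$, so it remains to treat the prime powers $q \equiv 1 \pmod 4$ with $q < 1513078$.

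For this finite list, I would proceed exactly as in the corresponding subroutines for $n=5,6,7$. Fix a normal element $\alpha \in \F_{q^4}$ and set $\beta = L_f(\alpha)$ with $f=(x+1)(x+b)$, where $b^2=-1$ in $\F_q$. By Lemma \ref{2normalF4} we have $\beta \notin \F_{q^2}$, so Proposition \ref{sieven4} applies to $\beta$: for any divisor $m$ of $q^4-1$ such that $\delta=1-\sum \frac{1}{\wp_i}>0$ (where $\wp_1,\ldots,\wp_r$ are the primes dividing $q^4-1$ but not $m$), the inequality $q^{1/2} \geq 3 \cdot W(m) \cdot \Delta$ with $\Delta=\frac{r-1}{\delta}+2$ guarantees that some $a+\beta$ with $a \in \F_q$ is primitive, hence by Lemma \ref{primes2normal} it is a primitive $2$-normal element. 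I would run a SageMath procedure analogous to \textbf{Test\_Delta} testing this inequality against each $q$ in the range $q<1513078$ with $q\equiv 1 \pmod 4$, trying successively the values $m=1,2,3,6,\ldots$ (small square-free divisors of $q^4-1$) to maximize the chance of success. This should eliminate the overwhelming majority of the prime powers in the range.

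The remaining recalcitrant prime powers, which will form a short finite list, must be handled by exhibiting explicit primitive $2$-normal elements. For each such $q$, we would find a root $\alpha$ of some irreducible polynomial $g \in \F_p[x]$ of degree $4\,[\F_q:\F_p]$ and verify primitivity and $2$-normality using the procedures \textbf{ordmodqn} and (an analogue of) \textbf{Normal} described in Appendix A; the resulting data would be tabulated as in Tables \ref{n=6_1}, \ref{n=6_3}, \ref{n6q23}. The main obstacle will be computational rather than conceptual: the range $q<1513078$ contains on the order of $10^5$ prime powers $\equiv 1 \pmod 4$, so one must organize the sieve tests efficiently and, for the short residual list, conduct an honest search over $\F_{q^4}$. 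No new theoretical ingredient beyond Proposition \ref{sieven4} and Lemma \ref{primes2normal} is required.
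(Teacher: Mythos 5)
Your proposal is correct and follows essentially the same route as the paper: necessity via Lemma \ref{qcong1mod4}, sufficiency for $q\geq 1513078$ via Proposition \ref{cota4fim}, then a sieve test based on Proposition \ref{sieven4} and Lemma \ref{primes2normal} over the finite range, finishing with explicit elements. The only practical difference is that the paper inserts an intermediate step (the procedure \textbf{TestExplicit4}, which searches only the $p$ candidates $\beta+j$ with $j\in\F_p$) to whittle the $918$ sieve failures down to just $q\in\{5,9,13,17\}$, whereas your plan would leave a considerably longer residual list to handle by explicit tabulation.
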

\begin{proof}
We proceed as in Lemma \ref{casen5}.
Let $t,u$ be positive real numbers such that $t+u > 8$ and
let $q^4-1=\wp_1^{a_1} \cdots \wp_v^{a_{v}} \cdot
\varrho_1^{b_1} \cdots \varrho_r^{b_{r}}$
be the prime factorization of $q^4-1$ such that $2 \leq \wp_i \leq 2^t$  or 
$2^{t+u} \leq \wp_i$
for $1 \leq i \leq v$ and
$2^t < \varrho_i < 2^{t+u}$
for $1 \leq i \leq r$ and	
consider $m= \wp_1^{a_1} \cdots \wp_v^{a_{v}}$.
Let $S_{t,u}<1$ be the 
sum of the inverses of all prime numbers between $2^t$ and $2^{t+u}$, and
$r(t,u)$ be the number of those primes. As in Lemma \ref{casen5}
$r \leq r(t,u)$, $\delta  \geq 1-S_{t,u}$ and
$\Delta \leq 2+ \frac{r(t,u)-1}{1 - S_{t,u}}$.
By Lemma \ref{cota-t},
considering that $2^4 \mid q^4-1$ and $4 < t+u$,
we have $W(m) < A_{t,u} \cdot q^{\frac{4}{t+u}}$, where
$$
A_{t,u} = 
\frac{2}{\sqrt[t+u]{2^4}} \cdot 
\prod \limits_{\substack{2<\wp < 2^t \\ \wp \text{ is prime}}} \frac{2}{\sqrt[t+u]{\wp}}.
$$
We know that $\beta =L_f(\alpha) \notin \F_{q^2}$
and we may apply Proposition \ref{sieven4}.
Therefore, if
$q^{\frac{1}{2}}\geq 3\cdot A_{t,u} \cdot q^{\frac{4}{t+u}} \cdot \Delta$
then $N_\beta(q^4-1)>0$. This condition is equivalent to
$q \geq \left(3\cdot A_{t,u} \cdot \Delta \right)^{\frac{2(t+u)}{t+u-8}}$. Taking $t=5$ and $u=8.5$
we get 
$N_\beta(q^4-1)>0$ for $q\geq M=2.12\cdot 10^{35}$.

Suppose now $q < M=2.12\cdot 10^{35}$.
We will use now
Proposition \ref{sieven4} with $m=q^2-1$.
Let $q^2+1=2 \cdot \wp_1^{a_1} \cdots \wp_r^{a_{r}}$
be the prime factorization of $q^2+1$.
For any odd prime number such that
$\wp \mid q^2+1$, we have 
$q^2 \not\equiv 1 \pmod{\wp}$ and
$q^4 \equiv 1 \pmod{\wp}$.
This means that $4 \mid \varphi(\wp)=\wp-1$.
Let $S_k$ be the sum of the inverses of the first $k$ prime numbers of the form $4j+1$ and
let
$P_k$ be the product of those $k$ prime numbers. So, from
$2P_r \leq q^2+1 < M^2+1$, we get $r \leq 33$, $S_r <0.60520004$,
$\delta >0.39479996$ and $\Delta \leq 83.054$.
Let
$$
A_{t} = 
\frac{2}{\sqrt[t]{2^3}} \cdot 
\prod \limits_{\substack{2<\wp < 2^t \\ \wp \text{ is prime}}} \frac{2}{\sqrt[t]{p}}
$$
be the constant from Lemma \ref{cota-t},
considering that $2^3 \mid q^2-1$ and $3 < t$.
Therefore, if 
$q^{\frac{1}{2}} \geq 3 \cdot A_t \cdot q^{\frac{2}{t}} \cdot  \Delta> 3 \cdot W(q^2-1) \cdot  \Delta$, and applying
Proposition \ref{sieven4}, then
$N_\beta(q^4-1)>0$. For $t=6.8$, we get 
$\left( 3 \cdot A_t \cdot \Delta \right)^{\frac{2t}{t-4}} \leq 7.321\cdot 10^{21}$.

Let us suppose now that  $M=7.321\cdot 10^{21}$ and $q < M$. We will use again
Proposition \ref{sieven4} with
$m=\gcd(q^4-1,2\cdot 3 \cdot 5 \cdot 7)$. 
Let $S_k$ be the sum of the inverses of the first $k$ prime numbers starting with
$11$ and
let $P_k$ be the product of those $k$ prime numbers.
Observe that
if $5 \nmid q^4-1$, then $q$ is a prime power of $5$ which implies that $3 \mid q^4-1$.
This means that $2^4 \cdot 3 \mid q^4-1$ or $2^4 \cdot 5 \mid q^4-1$. Let  $r$ be the
number of prime factors of $q^4-1$ greater than $7$. 
We have $r \leq 44$, since $P_r  < \frac{M^4-1}{48}$. So,
$S \leq S_r < 0.7821$, $\delta >0.2179$ and
therefore $\Delta< 2+\frac{44-1}{0.2179}<199.34$.
Thus,
$q^{\frac{1}{2}} \geq 3 \cdot W(m) \cdot \Delta$
for $q \geq 9.156\cdot 10^7$.

We repeat this last process
with $M=9.156\cdot 10^7$
and $m=\gcd(q^4-1,2\cdot 3 \cdot 5)$. Now 
$S_k$ is the sum of the inverses
of the first $k$ prime numbers starting with
$7$ and $P_k$ is the product of those $k$ prime numbers.
We have $P_r < \frac{M^4-1}{48}$ for $r \leq 19$
and $\Delta < 70.155$.
So, 
$q^{\frac{1}{2}} \geq 3 \cdot 2^3 \cdot 70.155 \geq 3 \cdot W(m) \cdot \Delta$
for $q \geq 2834914$. Repeating
this process one last time with $M=2834914$ and 
$m=\gcd(q^4-1,2\cdot 3 \cdot 5)$, we get $r \leq 16$,
$\Delta <51.253$. From Proposition \ref{sieven4} we get
$N_\beta(q^4-1)>0$ if $q \geq 1513078 >
\left(3 \cdot 2^3 \cdot  51.253\right)^2$.
From Lemma \ref{primes2normal},
we get that, for $q\geq 1513078$, there exists a primitive $2$-normal element
in $\F_{q^4}$.

There are $57731$ prime powers $q \equiv 1 \pmod 4$
less than $1513078$. We use the test $q^\frac{1}{2} \geq 3 \cdot W(m) \cdot \Delta$ using 
the SageMath procedure
{\bf Test(q,list)} from Appendix A where the variable $list$ is the list of prime numbers
which can be factors of $m$. With $list=[2,3,5]$, the procedure {\bf Test(q,list)}
returns False
for $1704$ primes powers from all prime powers $q \equiv 1 \pmod 4$
less than $1513078$. For those prime powers,
the procedure {\bf Test(q,list)}, with
$list=[2,3]$, returns False for $934$ prime powers. Finally,
for those last prime powers, the procedure {\bf Test(q,list)}, with
$list=[2]$, returns False for $918$ prime powers.

Now,
we use the SageMath procedure named
{\bf TestExplicit4} (see Appendix A)
to find
a primitive $2$-normal element in $\F_{q^4}$. In this procedure, 
we found first a normal element $\alpha \in \F_{q^4}$, 
$b \in \F_q$ a root of $x^2+1=0$
and we define $\beta =L_{(x+1)(x+b)}(a)$. Next, we try to find an element
$j \in \F_p$  such that $\beta + j$ is primitive.
From Lemma \ref{primes2normal}, 
$\beta + j$ is also $2$-normal. This procedure
returns True if $\beta + j$ is primitive $2$-normal in $\F_{q^4}$ for some $j \in \F_p$.
For all $918$ prime powers for which we didn't conclude
with the procedure {\bf Test(q,list)}, the procedure
{\bf TestExplicit4(q)} returns
False only for $13,17,125$.
Table \ref{n=4} shows for
these cases a primitive $2$-normal element
$\alpha \in \F_{q^4}$,
such that $h(\alpha)=0$, for
some irreducible polynomial $h \in \F_p[x]$, where $p$ is the characteristic
of $\F_q$. This completes the proof.
\end{proof}

%
%
%

\section*{Acknowledgements}
Victor G.L.\ Neumann was partially funded by FAPEMIG APQ-03518-18. The authors
would like to thank 
C\'icero Carvalho, Carol Lafetá and the referees
for very useful comments and suggestions that improved the presentation of this work.



\section*{Appendix A: Procedures in SageMath}

{\small
\begin{verbatim}
def Test_Delta(q,n,u):
   A.<a>=GF(q); P.<x>=PolynomialRing(A)
   M1=factor(q^n-1); M2=factor(x^n-1) ; m=1
   count1=0; count2=0; choose=True
   T=1
   for g in M2:
      if g[0].degree()==1 and q<7:
         T=T*g[0]
   r=len(M1); s=len(M2); S1=0; S2=0
   for p in M1:
      if gcd(p[0],u)!=1:
         m=m*p[0]
         r=r-1
      else:
         S1=S1+1/p[0]
   for Q in M2:
      if gcd(Q[0],T)!=1:
         s=s-1
      else:
         S2=S2+1/q^(Q[0].degree())
   delta=1-S1-S2
   if delta>0:
      Delta=2+(r+s-1)/delta; A=(q*1.0)^(n*0.5-2)
      B=Delta*2^(len(factor(m)))*2^len(factor(T)); Fact=A>=B
   else:
      Fact=False
   return Fact
\end{verbatim}
}
---------------------------------------------------------------------------------------------------------
{\small
\begin{verbatim}
#Given p,q,n and g define:
B=GF(p); T.<x>=PolynomialRing(B)
C.<c>=B.extension(g); R.<x>=PolynomialRing(C)
#Testing if c is primitive: ordmodqn(c)
#Testing if c is 2-normal: Normal(c)
#where:
def ordmodqn(d):
   ord=q^n-1
   for m in divisors(q^n-1):
      if d^m==1:
         if m<ord:
            ord=m
   return (q^n-1)/ord
def Normal(e):
   pol=0
   for i in range(0,n):
      pol=pol+e^(q^i)*x^(n-1-i)
   pol_gcd=gcd(pol,x^n-1); k=pol_gcd.degree()
   return k
\end{verbatim}
}
---------------------------------------------------------------------------------------------------------
{\small
\begin{verbatim}
def TestExplicit5(q):
   A.<a>=GF(q^5); T.<x>=PolynomialRing(A)
   if mod(q,5)==0:
      beta=a^(q^2)-2*a^q+a
   else:
      Sol=(x^2+x-1).roots(); b=Sol[0][0]; beta=a^(q^2)-b*a^q+a
   j=0; Teste=False; valor=True
   while valor:
      c=beta+j; ord=q^5-1
      for m in divisors(q^5-1):
         if c^m==1:
            if m<ord:
               ord=m
      e=(q^n-1)/ord
      if e==1:
         pol=0
         for i in range(0,5):
            pol=pol+c^(q^i)*x^(4-i)
         pol_gcd=gcd(pol,x^n-1); k=pol_gcd.degree()
         if k==2:
            valor=False; Teste=True
      j=j+1
      if beta+j==beta:
         valor=False
   return Teste
\end{verbatim}
}
---------------------------------------------------------------------------------------------------------
{\small
\begin{verbatim}
def Test(q,list):
   L=factor(q^4-1); m=1; r=len(L); S=0.0
   for p in L:
      if p[0] in list:
         m=m*p[0]; r=r-1
      else:
         S=S+1/p[0]
   delta=1-S
   if delta>0:
      Delta=2+(r-1)/delta; A=(q*1.0)^(0.5); B=3*Delta*2^(len(factor(m)))
      Fact=A>=B
   else:
      Fact=False
   return Fact
\end{verbatim}
}
---------------------------------------------------------------------------------------------------------
{\small
\begin{verbatim}
def TestExplicit4(q):
   A.<a>=GF(q^4, modulus="primitive"); T.<x>=PolynomialRing(A)
   z=1
   norm=True
   while norm:
      alpha=a^z
      pol=0
      for i in range(0,n):
         pol=pol+alpha^(q^i)*x^(n-1-i)
      pol_gcd=gcd(pol,x^n-1); k=pol_gcd.degree()
      if k==0:
         norm=False
      z=z+1
      if z==q^4-1 and norm:
         norm=False; Test=False 
   Sol=(x^2+1).roots(); b=Sol[0][0]
   beta=alpha^(q^2)+(b+1)*alpha^q+b*alpha; j=0
   Test=False; valor=True
   while valor:
      c=beta+j; ord=q^n-1
      for m in divisors(q^n-1):
         if c^m==1:
            if m<ord:
               ord=m
      e=(q^n-1)/ord
      if e==1:
         pol=0
         for i in range(0,n):
            pol=pol+c^(q^i)*x^(n-1-i)
         pol_gcd=gcd(pol,x^n-1); k=pol_gcd.degree()
         if k==2:
            valor=False; Test=True
      j=j+1
      if beta+j==beta:
         valor=False
   return Test
\end{verbatim}
}

\section*{Appendix B: Tables}
\begin{table}[H]
	\centering
	\begin{tabular}{cc}
		$(q,n)$ & $g(x) \in \F_p[x]$ \\
		\hline
		$(2,6)$   &
		$x^6 + x^5 + x^3 + x^2 + 1$  \\ \hline
		$(2,8)$   &
		$x^8 + x^5 + x^3 + x + 1$  \\ \hline
		$(2,9)$ & 
		$x^9 + x^8 + x^6 + x^5 + x^4 + x^3 + x^2 + x + 1$ 
		\\ \hline
		$(2,10)$ & 
		$x^{10} + x^6 + x^5 + x^3 + x^2 + x + 1$ 
		\\ \hline
		$(2,12)$ &
		$x^{12} + x^{10} + x^8 + x^4 + x^3 + x^2 + 1$
		\\ \hline	
		$(3,6)$ & 
        $x^6 + x^5 + x^4 + x^3 + x + 2$
        \\ \hline
		$(3,8)$ & 
        $x^8 + 2x^5 + x^4 + 2x^2 + 2x + 2$
\\ \hline
		$(3,10)$ & 
        $x^{10} + x^8 + x^7 + 2x^6 + x^5 + x^4 + x^3 + 2x^2 + x + 2$
\\ \hline
		$(3,12)$ & 
        $x^{12} + x^{10} + 2x^9 + 2x^8 + x^7 + x^6 + 2x^4 + 2x^3 + 2$
\\ \hline
	\end{tabular}\vspace*{0.5cm}
	\caption{$\alpha \in \F_{q^n}$ is a primitive
		$2$-normal element such that $g(\alpha)=0$}
	\label{Especificq23}
\end{table}

\begin{table}[H]
	\centering
	\begin{tabular}{cc}
		$(q,n)$ & $g(x) \in \F_p[x]$ \\
		\hline
		$(4,5)$   & $x^{10} + x^8 + x^6 + x^5 + x^3 + x + 1$		
		\\ \hline
		$(4,6)$   & $x^{12} + x^{11} + x^{10} + x^8 + x^6 + x^4 + x^3 + x + 1$		
		\\ \hline
		$(4,8)$ & $x^{16} + x^{13} + x^{12} + x^{11} + x^{10} + x^9 + x^8 + x^7 + x^5 + x^2 + 1$
		\\ \hline
		$(4,9)$ & $x^{18} + x^{16} + x^{12} + x^{10} + x^4 + x + 1$
		\\ \hline
		$(5,5)$ & $x^5 + 2x^3 + x + 2$
		\\ \hline
		$(5,6)$ & $x^6 + x^4 + 4x^3 + x^2 + 2$
		\\ \hline
		$(5,8)$ & $x^8 + 4x^7 + x^6 + 3x^4 + x^3 + x + 3$
		\\ \hline
		$(5,12)$ & $x^{12} + x^{11} + 3x^{10} + x^9 + 4x^7 + 3x^5 + 3x^3 + 3x^2 + 4x + 3$
		\\ \hline
		$(7,6)$ & $x^6 + x^4 + 5x^3 + 4x^2 + 6x + 3$
		\\ \hline
		$(7,8)$ & $x^8 + 3x^6 + 6x^5 + x^4 + 6x^3 + 5x^2 + 4x + 5$
        \\ \hline
        $(8,6)$ & $x^{18} + x^{16} + x^{15} + x^{14} + x^{13} + x^6 + x^2 + x + 1$
        \\ \hline
        $(8,7)$ & $x^{21} + x^{16} + x^{14} + x^{11} + x^7 + x^6 + x^5 + x^3 + 1$
        \\ \hline
\end{tabular}\vspace*{0.5cm}
\caption{$\alpha \in \F_{q^n}$ is a primitive $2$-normal element such that $g(\alpha)=0$}
\label{Especificq48}
\end{table}

\begin{table}[H]
	\centering
	\begin{tabular}{cc}
		$(q,n)$ & $g(x) \in \F_p[x]$ \\
		\hline
        $(9,5)$ & $x^{10} + x^7 + 2x^6 + x^5 + x^4 + 2x^3 + 2$
        \\ \hline 
        $(9,6)$ & $x^{12} + x^9 + x^8 + x^7 + x^6 + 2x^4 + 2x^3 + 2x + 2$
        \\ \hline
        $(9,8)$ & $x^{16} + 2x^{14} + 2x^{13} + 2x^{12} + x^{11} + x^{10} + 2x^9 + x^8 + x^5 + x^4 + 2$
        \\ \hline
        $(11,5)$ & $x^5 + 9x^3 + 4x^2 + 9x + 3$
        \\ \hline
        $(11,6)$ & $x^6 + 9x^5 + x^4 + 3x^3 + x^2 + x + 7$
		\\ \hline
		$(13,6)$ & $x^6 + 10x^3 + 11x^2 + 11x + 2$
		\\ \hline
		$(16,5)$ & $x^{20} + x^{19} + x^{15} + x^{13} + x^{11} + x^{10} + x^7 + x^6 + x^3 + x + 1$
		\\ \hline
		$(16,6)$ &
		$\begin{matrix}
		x^{24} + x^{22} + x^{21} + x^{20} + x^{19} + x^{18} + x^{15} + \\
		x^{14} + x^{12} + x^{10} + x^8 + x^7 + x^3 + x^2 + 1
		\end{matrix}
		$
		\\ \hline
		$(17,6)$ & $x^6 + 9x^5 + 15x^4 + 6x^3 + x^2 + 4x + 14$
		\\ \hline
		$(19,5)$ & $x^5 + 2x^4 + x^2 + 2x + 16$
		\\ \hline
		$(19,6)$ & $x^6 + 17x^3 + 17x^2 + 6x + 2$
		\\ \hline
\end{tabular}\vspace*{0.5cm}
\caption{$\alpha \in \F_{q^n}$ is a primitive $2$-normal element such that $g(\alpha)=0$}
\label{Especificq919}
\end{table}

\begin{table}[H]
	\centering
	\begin{tabular}{cc}
		$q$ & $g(x) \in \F_p[x]$ \\
		\hline
		$23$   & $x^6 + 3x^5 + 20x^4 + 12x^3 + 6x + 11$  \\ \hline
		$25$   & $x^{12}+x^{11}+3x^{10}+x^9 + 4x^7 + 3x^5 + 3x^3 + 3x^2 + 4x + 3$  \\ \hline
		$29$ & $x^6 + 14x^4 + 22x^3 + 6x^2 + 2x + 15$ \\ \hline
		$31$ & $x^6+19x^3 + 16x^2 + 8x + 3$  \\ \hline
		$37$ & $x^6+35x^3 + 4x^2 + 30x + 2$ \\ \hline
		$41$ & $x^6 + 17x^4 + 19x^3 + 9x^2 + 38x + 17$ \\ \hline
		$43$ & $x^6+19x^3 + 28x^2 + 21x + 3$ \\ \hline
		$47$ & $x^6 + 35x^4 + 36x^3 + 36x^2 + 19x + 31$ \\ \hline
		$49$ & $x^{12} + 6x^{10} + 5x^9 + 6x^8 + 6x^7 
		+ 3x^6 + x^5 + 4x^3 + x^2 + 5x + 3$ \\ \hline
		$59$   & $x^6 + 13x^4 + 56x^3 + 15x^2 + 2x + 11$  \\ \hline
$61$   & $x^6+49x^3 + 3x^2 + 29x + 2$  \\ \hline
$67$ & $x^6 + 32x^5 + 58x^4 + 46x^3 + 22x^2 + 59x + 61$ \\ \hline
$79$ & $x^6 + 19x^3 + 28x^2 + 68x + 3$ \\ \hline
	\end{tabular} \vspace*{0.5cm}
	\caption{$\alpha \in \F_{q^6}$ is a primitive
		$2$-normal element such that $g(\alpha)=0$}
	\label{n=6_1}
\end{table}

\begin{table}[H]
	\centering
	\begin{tabular}{cc}
		$q$ & $h(x) \in \F_p[x]$ \\
		\hline
		$13$ & $x^4 + 11x^3 + 8x^2 + 6x + 11$ \\ \hline
		$17$ & $x^4 + 10x^2 + 5x + 3$ \\ \hline
		$125$ & $x^{12} + x^{10} + 3x^9 + 4x^8 + 3x^6 + 2x^5 + 2x^4 + 3x^3 + x^2 + 4x + 3$ \\ \hline
	\end{tabular} \vspace*{0.5cm}
	\caption{$\alpha \in \F_{q^4}$ is a primitive
		$2$-normal element such that $h(\alpha)=0$}
	\label{n=4}
\end{table}

\vspace*{2cm}

\noindent Josimar J. R. Aguirre \\
Departamento de Matem\'{a}tica \\
Universidade Federal de Uberl\^{a}ndia - UFU\\
Uberl\^{a}ndia - MG, Brazil - 38400-902 \\
josimar.mat@ufu.br
\vspace*{1cm}\\
Victor G.L. Neumann\\
Departamento de Matem\'{a}tica \\
Universidade Federal de Uberl\^{a}ndia - UFU\\
Uberl\^{a}ndia - MG, Brazil - 38400-902 \\
victor.neumann@ufu.br
\end{document}